\theoremstyle{plain}
\newtheorem{theorem}{Theorem}[section]
\newtheorem{proposition}{Proposition}[section]
\newtheorem{lemma}{Lemma}[section]
\newtheorem{remark}{Remark}[section]
\newtheorem{definition}{Definition}[section]
\newcommand{\eps}{\varepsilon}
\newcommand{\authorfootnotes}{\renewcommand\thefootnote{\@fnsymbol\c@footnote}}%
\newcommand{\norm}[1]{\left\Vert#1\right\Vert}
\newcommand{\be} {\begin{equation}}
\newcommand{\ee} {\end{equation}}
\newcommand{\bea} {\begin{eqnarray}}
\newcommand{\eea} {\end{eqnarray}}
\newcommand{\Bea} {\begin{eqnarray*}}
\newcommand{\Eea} {\end{eqnarray*}}
\newcommand{\pa} {\partial}
\newcommand{\ba} {\beta}
\newcommand{\na}{\nabla}
\newcommand{\ga} {\gamma}
\newcommand{\Ga} {\Gamma}
\newcommand{\De}{\Delta}
\newcommand{\la} {\lambda}
\newcommand{\si} {\sigma}
\newcommand{\no} {\nonumber}
\newcommand{\lab} {\label}
\newcommand{\va} {\varphi}
\newcommand{\f}{\frac}
\newcommand{\R}{\mathbb R}
\newcommand{\Rn}{\mathbb R^N}
\numberwithin{equation}{section} \allowdisplaybreaks
\begin{document}
    
\title[Fractional Hardy equations]{Fractional Hardy equations with critical and supercritical exponents}

\author{Mousomi Bhakta, Debdip Ganguly and Luigi Montoro}
\address{M. Bhakta, Department of Mathematics, Indian Institute of Science Education and Research, Dr. Homi Bhaba Road, Pune-411008, India}
\email{mousomi@iiserpune.ac.in}
\address{D. Ganguly, Department of Mathematics, Indian Institute of Technology Delhi, IIT Campus, Hauz Khas, New Delhi, Delhi 110016, India}
\email{debdip@maths.iitd.ac.in}
\address{L. Montoro, Dipartimento di Matematica, UNICAL, Ponte Pietro Bucci 31 B, 87036 Arcavacata
di Rende, Cosenza, Italy.}
\email{montoro@mat.unical.it}

\subjclass[2010]{35B07; 35B65; 35R09; 35S05; 47G30}
\keywords{Super-critical exponent, fractional Laplacian, Hardy's inequality, Harnack inequality, moving plane method.}
\date{}

\maketitle

\begin{abstract}
We study the existence/nonexistence and qualitative properties of the positive solutions to the problem
\begin{equation}
  \tag{$\mathcal P$}
\left\{\begin{aligned}
      (-\De)^s u -\theta\frac{u}{|x|^{2s}}&=u^p - u^q \quad\text{in }\,\, \Rn \\
      u &> 0 \quad\text{in }\,\, \Rn\no\\
            u &\in \dot{H}^s(\Rn)\cap L^{q+1}(\Rn), \no
 \end{aligned}
  \right.
\end{equation}
where $s\in (0,1)$, $N>2s$, $q>p\geq{(N+2s)}/{(N-2s)}$, $\theta\in(0, \Lambda_{N,s})$ and $\Lambda_{N,s}$ is the sharp constant in the fractional Hardy inequality.  For  qualitative properties of the solutions we mean, both the radial symmetry that is  obtained by using the moving plane method 
in a nonlocal setting on the whole $\mathbb{R}^N$, and   upper bound  behavior
of the solutions. To this last end  we use a representation result
that allows us to transform the original problem into a new nonlocal problem in a weighted fractional space.
\end{abstract}


\section{Introduction}
In this paper we consider the following singular problem
\begin{equation}
  \tag{$\mathcal P$}\label{eq:a3}
\left\{\begin{aligned}
      (-\De)^s u -\theta\frac{u}{|x|^{2s}}&=u^p - u^q \quad\text{in }\,\, \Rn\\
      u & > 0 \quad\text{in }\,\, \Rn\\
            u &\in \dot{H}^s(\Rn)\cap L^{q+1}(\Rn), 
 \end{aligned}
  \right.
\end{equation}
where $s\in(0,1)$ is fixed, $(-\De)^s$ denotes the  fractional Laplace operator which can be defined for the Schwartz class functions $\mathcal{S}(\Rn)$  as follows
\begin{equation} \label{De-u}
  \left(-\Delta\right)^su(x): = a_{N,s} \text{P.V.} \int_{\Rn}\frac{u(x)-u(y)}{|x-y|^{N+2s}} \, dy, \quad a_{N,s}= \frac{4^{s}\Ga(N/2+ s)}{\pi^{N/2}|\Ga(-s)|},
\end{equation}
$N>2s$,  $q>p\geq 2^*_s-1=\f{N+2s}{N-2s}$  and $\theta\in(0, \Lambda_{N,s})$, where $\Lambda_{N,s}$ is the sharp constant in the Hardy inequality 
$$\Lambda_{N,s} \, \int_{\Rn} \, \frac{|u(x)|^2}{|x|^{2s}} \, dx \leq \int_{\Rn}|\xi|^{2s}|\mathcal{F}(u)(\xi)|^2 \, d\xi.$$ In the above inequality $\mathcal{F}(u)$ denotes the Fourier transform of $u$ and 
\be\lab{La-ns}\Lambda_{N,s}=2^{2s}\frac{\Gamma^2(\frac{N+2s}{4})}{\Gamma^2(\frac{N-2s}{4})}.\ee Moreover, $$\lim_{s\to 1}\Lambda_{N,s}\to \bigg(\frac{N-2}{2}\bigg)^2.$$
\begin{definition}
Let $s\in(0,1)$. We define the homogeneous fractional Sobolev space of order $s$
as $$\dot{H}^s(\Rn):=\bigg\{u\in L^{2^*_s}(\Rn) \,:\, \int_{\Rn}|\xi|^{2s}|\mathcal{F}(u)|^2 \, d\xi<\infty  \bigg\},$$
namely the completion of $C^\infty_0(\Rn)$ under the norm 
\be\lab{dot-H}\|u\|_{\dot{H}^s(\Rn)}^2:=\int_{\Rn}|\xi|^{2s}|\mathcal{F}(u)|^2 \, d\xi.\ee
\end{definition}
Using Plancherel's identity, for $s\in (0,1)$, $N\geq 1$, we obtain an equivalent expression of the norm \eqref{dot-H}, namely 
\begin{equation}\lab{norm-equi}
\int_{\Rn}|\xi|^{2s}|\mathcal{F}(u)|^2 \, d\xi=\frac{a_{N,s}}{2}\int_{\Rn}\int_{\Rn}\frac{|u(x)-u(y)|^2}{|x-y|^{N+2s}} \, dx \, dy, \quad \forall \, u\in \dot{H}^s(\Rn),\end{equation}
see \cite{FLS-1}.
Using the above norm-equivalence and a density argument, it follows that
\be\lab{Hardy}
\Lambda_{N,s}\int_{\Rn}\frac{u^2(x)}{|x|^{2s}} \, dx\leq \frac{a_{N,s}}{2}\int_{\Rn}\int_{\Rn}\frac{|u(x)-u(y)|^2}{|x-y|^{N+2s}} \, dx \, dy, \quad \forall \, u\in \dot{H}^s(\Rn),
\ee
see \cite[Remark 1.3]{DMPS}.

The notion of solutions to \eqref{eq:a3} that we consider in this paper is given in the following 
\begin{definition}
We say $u\in\dot{H}^s(\Rn)\cap L^{q+1}(\Rn)$ is a weak solution of \eqref{eq:a3} if for every $\va\in \dot{H}^s(\Rn)\cap L^{q+1}(\Rn)$, we have
\begin{equation}\label{1.1bis}
\frac{a_{N,s}}{2}\int_{\Rn}\int_{\Rn}\frac{(u(x)-u(y))(\va(x)-\va(y))}{|x-y|^{N+2s}}\,dx\, dy-\theta\int_{\Rn}\frac{u\va}{|x|^{2s}}dx=\int_{\Rn}(u^p-u^q)\va \, dx.
\end{equation}
\end{definition}
We note that in the above definition, 
$$\int_{\Rn}u^p\va \, dx\leq \bigg(\int_{\Rn}u^{p+1}\, dx\bigg)^\frac{p}{p+1}\bigg(\int_{\Rn}\va^{p+1}\, dx\bigg)^\frac{1}{p+1}<\infty,$$
since $2^*_s\leq p+1<q+1$ implies $u, \va\in L^{p+1}(\Rn)$ by interpolation inequality. 
\begin{remark}\lab{r:1}
Note that, $0<\theta<\Lambda_{N,s}$. Therefore using  \eqref{dot-H}, \eqref{norm-equi} and \eqref{Hardy} we see that 
$$\bigg(\frac{a_{N,s}}{2}\displaystyle\int_{\Rn}\int_{\Rn}\frac{|u(x)-u(y)|^2}{|x-y|^{N+2s}} \, dx \, dy\bigg)^\frac{1}{2}$$ is an equivalent norm to
$$\bigg(\frac{a_{N,s}}{2}\int_{\Rn}\int_{\Rn}\frac{|u(x)-u(y)|^2}{|x-y|^{N+2s}}dx\,dy\, - \,  \theta \, \int_{\Rn} \frac{|u|^2}{|x|^{2s}}dx \bigg)^\frac{1}{2}.$$
\end{remark}
Very recently, a great deal of attention is given to the mathematical study of the following class of semilinear elliptic problems
\begin{equation}\lab{5-1-1}
\left\{\begin{aligned}
      (-\De)^s u -\theta\frac{u}{|x|^{2s}}&=f(x,u) \quad\text{in }\,\, \Rn\\
      u &\gneqq 0 \quad\text{in } \, \,\Rn\\
            u &\in \dot{H}^s(\Rn), 
            \end{aligned}
  \right.
\end{equation}

where $f(x,u)$ is a superlinear function in $u$. The local case. i.e, when $s = 1$ and $f(x, u) = u^\frac{N+2}{N-2}$ has been 
thoroughly investigated  by Terracini in \cite{ST}. Existence, uniqueness and qualitative properties of the solutions
have been shown by the author.   It is natural to study these type of problems in the nonlocal framework. The study 
of nonlocal Yamabe problem in the Euclidean space, i.e., for the case when $\theta = 0$ and $f(x,u)=u^{2^*_s-1}$ in \eqref{5-1-1}
has been studied in \cite{CLO}. The authors  completely characterize the solutions using the moving plane technique.
However for $\theta \neq 0,$  the presence of Hardy potential requires a new understanding to deal with the problem.  

\medskip 
In the last few years there are several  
 works related to Eq.\eqref{5-1-1}, see for instance (\cite{BBGM, BCP, DMPS, FF1, FF} and references therein). {Authors in \cite{BBGM} have studied the two sided Green function estimate of fractional Hardy operator and the integral representation of solution.} Dipierro, et. al in  \cite{DMPS} have considered $f(x,u)=u^{2^*_s-1}$ and proved existence, qualitative properties  and asymptotic behaviour of solutions both at $0$ and infinity. In \cite{BCP}, Bhakta, et. al have studied \eqref{5-1-1} with $f(x,u)=K(x){|u|^{2^*(t)-2}u}/{|x|^t}+f(x)$, where $0\leq t<2s$, $2^*_s(t):={2(N-t)}/{N-2s}$, $K$ is a positive
continuous function on $\mathbb{R}^{N}$, with $K(0)=1=\lim_{|x|\to\infty}K(x)$ and $f$ is a nonnegative nontrivial functional in the dual space of $\dot{H}^s(\Rn)$.  They have established existence of at least two positive solutions when $K\geq 1$ and $\|f\|_{(\dot{H}^s)'}$ is small enough but $f\not\equiv 0$. On the other hand, Fall and Felli in \cite{FF} have studied the more general relativistic Schr\"{o}dinger type equations.
We also quote the papers \cite{BM-2, BM, BMS-2} for  nonlocal equations (without Hardy potential) where the nonlinearity involved is of the form $f(x,u)=u^p-u^q$. In the regular case, $\theta=0$, \eqref{eq:a3} is reduced to the problem without Hardy term and it was studied in \cite{BM, BMS-2}. 

\

In this paper we are interested in studying the existence/nonexistence 
of solutions to problem \eqref{eq:a3} and we further investigate qualitative properties like radial symmetry and upper bound of solutions.  

This kind of critical and supercritical exponent problem has been extensively studied when $\theta =0$ and $s = 1$ (local case) by 
Merle and Peletier in \cite{MP1}. When $\theta \neq 0$ and  $s=1$ (local case), a complete classification of the nature of singularities and of the asymptotic 
behaviour of solutions near zero was studied in \cite{BS}.
In the nonlocal case, $s\in(0,1)$, the fractional framework introduces nontrivial difficulties that have interest in itself. Moreover due to nonlocal structure 
of the equation, none of the methods developed in the local case  $s= 1$   can be used for  \eqref{eq:a3}. 
There are many new novelties that need to be exploited to tackle the problem  \eqref{eq:a3} which are describe below.

\medskip


Below we state the main results of this paper. The first one is an existence and non-existence result, stated in the following 
\begin{theorem}\label{thm-non-exist}
Let $0 < \theta < \Lambda_{N, s}$. Then the following holds 

\medskip

\begin{itemize}
\item [$(i)$] If $q>p=2^{*}_s-1$, then \eqref{eq:a3} does not admit any nontrivial solution.  

\

\item [$(ii)$] If $q>p>2^{*}_s-1$, then there exists a positive solution to \eqref{eq:a3}.
\end{itemize}
\end{theorem}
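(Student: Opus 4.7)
The plan splits naturally along the two cases.

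\textbf{Non-existence at the critical exponent.} For part~(i), I would play two integral identities against each other. Testing~\eqref{eq:a3} against~$u$ itself (admissible since $u\in\dot H^s(\Rn)\cap L^{q+1}(\Rn)$) yields the energy identity
$$\mathcal{E}(u):=\|u\|_{\dot H^s(\Rn)}^2-\theta\int_{\Rn}\frac{u^2}{|x|^{2s}}\,dx=\int_{\Rn}u^{p+1}\,dx-\int_{\Rn}u^{q+1}\,dx.$$
Next I would invoke the fractional Pohozaev identity with Hardy potential---which can be derived along the lines of Ros-Oton--Serra and is already at hand in~\cite{DMPS} in the present setting---
$$\frac{N-2s}{2}\,\mathcal{E}(u)=\frac{N}{p+1}\int_{\Rn}u^{p+1}\,dx-\frac{N}{q+1}\int_{\Rn}u^{q+1}\,dx.$$
Eliminating $\mathcal{E}(u)$ between the two gives
$$\left(\frac{N-2s}{2}-\frac{N}{p+1}\right)\int_{\Rn}u^{p+1}\,dx=\left(\frac{N-2s}{2}-\frac{N}{q+1}\right)\int_{\Rn}u^{q+1}\,dx.$$
The defining property $p+1=2^*_s$ is precisely $\frac{N-2s}{2}=\frac{N}{p+1}$, so the left-hand side vanishes; the assumption $q>p=2^*_s-1$ makes the coefficient on the right strictly positive, forcing $\int_{\Rn}u^{q+1}=0$ and hence $u\equiv 0$. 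The only non-routine point is justifying Pohozaev at the given level of regularity, which follows from a standard bootstrap giving $u\in C^2$ away from the origin and suitable decay at infinity.

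\textbf{Existence in the supercritical range.} For part~(ii), I would construct a positive solution variationally on $X:=\dot H^s(\Rn)\cap L^{q+1}(\Rn)$ using the functional
$$I(u)=\tfrac12\,\mathcal{E}(u)-\tfrac{1}{p+1}\int_{\Rn}(u^+)^{p+1}\,dx+\tfrac{1}{q+1}\int_{\Rn}(u^+)^{q+1}\,dx,$$
which is of class $C^1$ on $X$ by Remark~\ref{r:1} together with the pointwise bound $u^{p+1}\le u^{2^*_s}+u^{q+1}$ (splitting according to $\{u\le 1\}$ and $\{u>1\}$, valid since $2^*_s<p+1<q+1$). The mountain-pass geometry is standard: a Sobolev--interpolation estimate produces $I(u)\ge c_0\|u\|_X^2$ in a small ball of $X$, while for a compactly supported $\varphi\ge 0$ a suitable two-parameter rescaling $A\varphi(\,\cdot\,/\mu)$ makes the $-(u^+)^{p+1}/(p+1)$ term dominate and $I(A\varphi(\,\cdot\,/\mu))<0$. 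This yields a Palais--Smale sequence $\{u_n\}\subset X$ at some level $c>0$, whose boundedness I would obtain by combining $I(u_n)$ and $\langle I'(u_n),u_n\rangle$ with interpolation between $L^{2^*_s}$ and $L^{q+1}$ to tame the indefinite term $\int(u_n^+)^{p+1}$.

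The principal obstacle is the compactness and nontriviality of the weak limit. Because $p+1,q+1>2^*_s$, no Rellich-type compact embedding is freely available, and the Hardy potential anchors the origin, breaking translation invariance so that vanishing cannot be excluded by a sliding argument. Following the strategy of~\cite{BM,BMS-2} for the analogous problem without Hardy term, I would exploit the specific $u^p-u^q$ structure: the stabilising term $-(u^+)^{q+1}/(q+1)$ in $I$ precludes $L^{q+1}$-concentration at points, while the strict positivity of the mountain-pass level $c$ together with the Nehari-type relation $\mathcal{E}(u_n)-\int(u_n^+)^{p+1}+\int(u_n^+)^{q+1}=o_n(1)$ rules out vanishing. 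The Hardy term itself is absorbed as a lower-order perturbation via the equivalent norm in Remark~\ref{r:1}. Once a nontrivial critical point $u$ is secured, testing against $u^-$ forces $u\ge 0$, and the strong maximum principle for the fractional Hardy operator (see~\cite{DMPS,BBGM}) upgrades this to $u>0$ on $\Rn$.
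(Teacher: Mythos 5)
Your part~(i) is essentially the paper's argument: the identity you obtain by eliminating $\mathcal{E}(u)$ between the energy identity and the Pohozaev identity is exactly \eqref{P-identity} of Proposition~\ref{pro:Pohozaev}, and the conclusion follows identically. The only caveat is that you dismiss the justification of the Pohozaev identity as ``standard bootstrap''; in the presence of the Hardy potential this is the bulk of the work (the paper carries it out via the Caffarelli--Silvestre extension, with careful limits in $\varepsilon$, $R$ and $\rho$ and a separate treatment of the singularity at the origin), but as a proof strategy it is the same.

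Part~(ii) is where you diverge, and where there are genuine gaps. The paper does \emph{not} use a mountain-pass argument: it minimizes $F(u)=\tfrac12\mathcal Q(u)+\tfrac{1}{q+1}\int|u|^{q+1}$ on the constraint $\int|u|^{p+1}=1$, replaces the minimizing sequence by its symmetric decreasing rearrangement (which lowers $F$ and preserves the constraint), and then gets strong $L^{p+1}(\Rn)$ convergence from the pointwise radial decay $u_n(R)\leq CR^{-(N-2s)/2}$ (whose tail is $L^{p+1}$-integrable precisely because $p>2^*_s-1$) combined with Vitali's theorem on bounded sets; a Lagrange multiplier and a scaling then produce a solution of \eqref{eq:a3}. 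Your mountain-pass scheme leaves the decisive compactness step unresolved, and the two mechanisms you invoke do not do the job as stated: Lions-type vanishing arguments control $L^r$ norms only for $2<r<2^*_s$, whereas here $p+1>2^*_s$, and ``the $+\tfrac1{q+1}\int(u^+)^{q+1}$ term precludes concentration'' is a heuristic that would need a full concentration-compactness analysis to substantiate. There are also problems upstream: the inequality $I(u)\geq c_0\|u\|_X^2$ on a small sphere of $X=\dot H^s(\Rn)\cap L^{q+1}(\Rn)$ fails (take $u\leq 0$ with small Gagliardo seminorm but $\|u\|_{L^{q+1}}$ of order $\rho$, which is possible since $q+1>2^*_s$), and boundedness of Palais--Smale sequences is not routine here because the Ambrosetti--Rabinowitz structure is broken by the $+\tfrac1{q+1}\int(u^+)^{q+1}$ term: the combination $I(u_n)-\tfrac1{p+1}\langle I'(u_n),u_n\rangle$ carries the $L^{q+1}$ term with the wrong sign, and closing the resulting interpolation inequalities is not automatic. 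Finally, your positivity step relies on a strong maximum principle that the paper only has under the extra hypothesis $q+1>(p-1)N/(2s)$, which is not assumed in part~(ii); the paper instead proves strict positivity by exploiting the radial monotonicity of the constructed minimizer together with a direct nonlocal test-function argument. In short: (i) is fine; (ii) would need to be reworked, most naturally along the constrained-minimization-plus-rearrangement route, since that is what converts the supercritical lack of compactness into a tractable radial decay estimate.
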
 
In order to prove the first part of Theorem \ref{thm-non-exist} we  show a Pohozaev identity (see Proposition \ref{pro:Pohozaev})  using the representation of fractional laplacian $(-\Delta)^s$ as harmonic extension (we give the details in Section \ref{section-2}). To prove the second part of our result, we deal with a constrained minimization problem (see formula \ref{eq:peach})  and then we use Lagrange multipliers technique to get a solution to \eqref{eq:a3}. Moreover in Section  \ref{section-2} we show that, in some case,  weak solutions to \eqref{eq:a3} are indeed strong solutions in $\mathbb R^n \setminus \{0\}$, see Proposition \ref{l:8}.

In the  next two results (see also  Remark \ref{rem:giurie} below) we deduce  qualitative properties of solutions to \eqref{eq:a3} when  $$q+1>(p-1)\frac{N}{2s}.$$  

In particular the first one is related to  the radial symmetry  of the solutions. We have the following 
\begin{theorem}\label{thm:rad}Let $p>2^{*}_s-1$ and 
\begin{equation}\label{eq:qmaggioredip}
q+1>(p-1)\frac{N}{2s}.
\end{equation} 
Assume, $0< \theta< \Lambda_{N,s}$ and let $u$ be a positive   solution to  \eqref{eq:a3}. Then $u$ is radial and radially decreasing with respect to the origin. Namely there exists some strictly decreasing function $$w:(0,+\infty)\rightarrow (0,+\infty)$$ such that
$$u(x)=w(r), \quad r=|x|.$$
\end{theorem}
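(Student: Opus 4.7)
The strategy is the nonlocal moving plane method. Since both the Hardy term $|x|^{-2s}$ and the nonlinearity $u^p-u^q$ are rotationally invariant, it suffices to prove symmetry with respect to a single hyperplane through the origin, say $\{x_1=0\}$; applying the same argument to an arbitrary rotated coordinate system then upgrades the symmetry to radiality. For $\lambda\in\R$ set $\Sigma_\lambda=\{x\in\Rn:x_1<\lambda\}$, $x^\lambda=(2\lambda-x_1,x_2,\dots,x_N)$, $u_\lambda(x)=u(x^\lambda)$ and $w_\lambda=u_\lambda-u$. A direct computation from \eqref{eq:a3} gives, in $\Sigma_\lambda$,
\begin{equation*}
(-\Delta)^s w_\lambda-\theta\,\frac{w_\lambda}{|x|^{2s}}=\theta\,u_\lambda\left(\frac{1}{|x^\lambda|^{2s}}-\frac{1}{|x|^{2s}}\right)+c_\lambda(x)\,w_\lambda,
\end{equation*}
where $c_\lambda$ is the divided-difference coefficient of $t\mapsto t^p-t^q$ between $u(x)$ and $u_\lambda(x)$. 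For $\lambda\le 0$ and $x\in\Sigma_\lambda$ one checks $|x^\lambda|\le|x|$, so the first term on the right has a favorable nonnegative sign.

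Next, test the weak formulation \eqref{1.1bis} against the antisymmetric extension across $T_\lambda=\{x_1=\lambda\}$ of $-w_\lambda^-$, a function in $\dot{H}^s(\Rn)\cap L^{q+1}(\Rn)$. A standard symmetrization of the double integral in \eqref{norm-equi} produces $\|w_\lambda^-\|_{\dot{H}^s(\Rn)}^2$ on the left; the term $\theta u_\lambda(|x^\lambda|^{-2s}-|x|^{-2s})$ and the $-u^q$ contribution inside $c_\lambda$ yield pieces of favorable sign on the right, leaving
\begin{equation*}
\|w_\lambda^-\|_{\dot{H}^s(\Rn)}^2\le\theta\int_{\Sigma_\lambda}\frac{(w_\lambda^-)^2}{|x|^{2s}}\,dx+C\int_{\Sigma_\lambda\cap\{w_\lambda<0\}}u^{p-1}\,(w_\lambda^-)^2\,dx.
\end{equation*}
The fractional Hardy inequality \eqref{Hardy} absorbs the first integral into the left since $\theta<\Lambda_{N,s}$ (Remark \ref{r:1}). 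H\"older's inequality and the Sobolev embedding $\dot{H}^s\hookrightarrow L^{2^*_s}$ bound the second by $C\,\|u\|_{L^{(p-1)N/(2s)}(\Sigma_\lambda\cap\{w_\lambda<0\})}^{p-1}\,\|w_\lambda^-\|_{\dot{H}^s}^2$. The hypothesis $q+1>(p-1)N/(2s)$, combined with $u\in\dot{H}^s(\Rn)\cap L^{q+1}(\Rn)$, places $u$ in $L^{(p-1)N/(2s)}(\Rn)$ by interpolation, since $p>2^{*}_s-1$ implies $(p-1)N/(2s)>2^{*}_s$. Hence for $\lambda$ sufficiently negative this constant is arbitrarily small, forcing $w_\lambda^-\equiv 0$ and initializing the moving planes.

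Set $\lambda_0=\sup\{\lambda\le 0:\,w_\mu\ge 0\text{ in }\Sigma_\mu\text{ for all }\mu\le\lambda\}$. Assuming $\lambda_0<0$, a nonlocal strong maximum principle applied to $w_{\lambda_0}$, which is a supersolution of a linear fractional equation with Hardy potential (the extra term $\theta u_{\lambda_0}(|x^{\lambda_0}|^{-2s}-|x|^{-2s})\ge 0$ being harmless), yields $w_{\lambda_0}>0$ in $\Sigma_{\lambda_0}$. Since the overflow region $\{w_{\lambda_0+\eps}<0\}\cap\Sigma_{\lambda_0+\eps}$ has vanishing measure as $\eps\to 0^+$, the integrability estimate above can be reapplied there with arbitrarily small constant, extending the sign condition past $\lambda_0$ and contradicting maximality. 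Hence $\lambda_0=0$, i.e.\ $u(-x_1,x')\ge u(x_1,x')$ for $x_1<0$. Applying the same argument to $\tilde u(x_1,x'):=u(-x_1,x')$, which solves the same equation by rotational invariance, gives the reverse inequality and so symmetry across $\{x_1=0\}$. Repeating the argument in every direction produces radial symmetry, and the strict positivity $w_\lambda>0$ in $\Sigma_\lambda$ for every $\lambda<0$ (from the strong maximum principle) gives strict radial decrease.

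The main obstacle is the Hardy potential: it breaks translation invariance, so the planes must be moved symmetrically about the origin, but this restriction is also precisely what guarantees $|x^\lambda|\le|x|$ on $\Sigma_\lambda$ for $\lambda\le 0$ and hence the favorable sign that drives the argument. Equally delicate is the nonlocal strong maximum principle compatible with the Hardy singularity, used both to produce strict monotonicity and to extend the procedure past $\lambda_0$. The role of the hypothesis $q+1>(p-1)N/(2s)$ is to furnish $u\in L^{(p-1)N/(2s)}(\Rn)$, which is the exact integrability needed to initialize the moving planes via H\"older's inequality.
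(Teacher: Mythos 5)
Your proposal follows essentially the same route as the paper: the moving plane method carried out on all of $\Rn$ with the antisymmetric truncation of $u-u_\lambda$ as test function, the Hardy inequality \eqref{Hardy} with $\theta<\Lambda_{N,s}$ absorbing the singular potential, and H\"older plus Sobolev reducing the start of the planes to the finiteness of $\int_{\Rn}u^{(p-1)N/(2s)}\,dx$, which is exactly what hypothesis \eqref{eq:qmaggioredip} provides by interpolation between $L^{2^*_s}$ and $L^{q+1}$. Two steps, however, are asserted in a form that does not hold as stated, and they are precisely where the paper has to work.

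First, the continuation past $\bar\lambda$: the overflow region $\{w_{\bar\lambda+\eps}<0\}\cap\Sigma_{\bar\lambda+\eps}$ does \emph{not} have vanishing measure --- it can always contain unbounded pieces of the half-space near infinity, and the slab $\Sigma_{\bar\lambda+\eps}\setminus\Sigma_{\bar\lambda}$ itself has infinite measure. What one actually shows is that, thanks to the strict inequality $u<u_{\bar\lambda}$ on compact subsets of $\Sigma_{\bar\lambda}$ away from the reflected origin $0_{\bar\lambda}$, the overflow region is confined to the union of the exterior of a large ball, a small ball around $0_{\bar\lambda}$, and a thin slab around $T_{\bar\lambda}$; the coefficient is then small because $\int u^{(p-1)N/(2s)}$ is small over that union (integrability at infinity for the first piece, absolute continuity of the integral for the bounded pieces). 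Second, the strong maximum principle you invoke is the one for \emph{antisymmetric} supersolutions, and it yields $u<u_{\bar\lambda}$ in $\Sigma_{\bar\lambda}$ only after the degenerate alternative $u\equiv u_{\bar\lambda}$ in $\Sigma_{\bar\lambda}$ has been excluded. That exclusion is nontrivial here and is exactly where the Hardy potential enters: the equation satisfied by $u_{\bar\lambda}$ carries the singular weight $|x_{\bar\lambda}|^{-2s}$, which blows up at $0_{\bar\lambda}$, a point where $u$ is smooth; if $u\equiv u_{\bar\lambda}$ near $0_{\bar\lambda}$, matching the two pointwise equations there forces $u$ to vanish like $|x_{\bar\lambda}|^{2s}$ at $0_{\bar\lambda}$, contradicting positivity. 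Likewise, the strict inequality at a touching point $\bar x$ is obtained in the paper not from a black-box maximum principle but by the direct kernel computation comparing $|\bar x-y|^{-N-2s}$ with $|\bar x-y_{\bar\lambda}|^{-N-2s}$ for $y\in\Sigma_{\bar\lambda}$. With these two points repaired, your argument coincides with the paper's.
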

We prove the result exploiting the moving plane method, well understood in the case of  local problems. In the non local case we refer to
\cite{BMS, CLO, FeWa,  JW2, JW3, MPS2, soave}. However the presence of the Hardy potential in equation \eqref{eq:a3},  on the one hand, makes difficult to use the technique developed in~\cite{CLO} where the authors exploited 
the equivalence of \eqref{eq:a3} to an appropriate integral equation and on the other hand,  solutions to \eqref{eq:a3} loose (because the presence of Hardy potential) regularity at the origin. For this reason, in order to prove the radial symmetry 
of every solution to \eqref{eq:a3}, we  use an  approach based on 
the moving plane method  for weak solutions of the equation in all $\mathbb{R}^N$  taking care  of all difficulties introduced by the presence of Hardy potential.

In the second one, we establish an estimate from above of the asymptotic behavior of the solutions to~\eqref{eq:a3}. We have the following 
\begin{theorem}\lab{t:3}
Let $p>2^{*}_s-1$ and
\begin{equation}\nonumber
q+1>(p-1)\frac{N}{2s}.
\end{equation} 
Assume, $0< \theta< \Lambda_{N,s}$ and let $u$ be a positive   solution to  \eqref{eq:a3}. Then there exists a positive constant $C=C(p,q,s,N,\gamma_{\theta},\|u\|_{L^{q+1}(\mathbb R^N)})$  such that
$$0 < u(x)\leq C|x|^{-\ga_{\theta}}, \quad |x|>0$$
where $\gamma_{\theta}\in (0, (N-2s)/2) $ is a  parameter   determined as the unique solution to \eqref{psi-theta-hairagione}.
\end{theorem}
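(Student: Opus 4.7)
The plan is to combine the ground-state representation associated with the fractional Hardy operator with a Moser-type iteration in a weighted fractional Sobolev space. Since $\gamma_\theta$ is by definition the unique root in $(0,(N-2s)/2)$ of the relation associated with the Hardy operator, the homogeneous function $\phi_\theta(x):=|x|^{-\gamma_\theta}$ is a positive distributional solution of the linear Hardy equation
\[
(-\De)^s\phi_\theta - \theta\frac{\phi_\theta}{|x|^{2s}}=0 \quad\text{in } \Rn\setminus\{0\}.
\]
I would substitute $u=\phi_\theta v$ into the weak formulation \eqref{1.1bis}, test against $\phi_\theta\va$, and use the fractional product rule
$(-\De)^s(\phi_\theta v)=\phi_\theta(-\De)^s v + v(-\De)^s\phi_\theta - I_s(\phi_\theta,v)$
(where $I_s$ denotes the bilinear ``carr\'e du champ'' term). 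The singular Hardy term then cancels exactly against $v\,(-\De)^s\phi_\theta$, and the problem reduces to a weighted nonlocal equation for $v$,
\[
\mathcal{L}_{\phi_\theta}v \;=\; |x|^{-(p-1)\gamma_\theta}v^p \;-\; |x|^{-(q-1)\gamma_\theta}v^q,
\]
where $\mathcal{L}_{\phi_\theta}$ is the symmetric weighted operator whose Dirichlet form is $\tfrac{a_{N,s}}{2}\iint \frac{(v(x)-v(y))^2}{|x-y|^{N+2s}}\phi_\theta(x)\phi_\theta(y)\,dx\,dy$. This is precisely the representation alluded to in the introduction, and it reduces Theorem \ref{t:3} to proving $v\in L^\infty(\Rn)$.

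To prove $v\in L^\infty$ I would run a Moser iteration in the weighted fractional space associated with $\phi_\theta^{\,2}\,dx=|x|^{-2\gamma_\theta}\,dx$, in which one has a Sobolev-type embedding playing the role of $\dot H^s\hookrightarrow L^{2^*_s}$. The hypothesis $u\in L^{q+1}(\Rn)$ transfers to the initial weighted $L^{q+1}$ integrability of $v$, and the nonlinear term acts as a Kato-type coefficient $V(x):=|x|^{-(p-1)\gamma_\theta}v^{p-1}$ multiplying $v$ on the right. The assumption $q+1>(p-1)N/(2s)$ is exactly the sub-Kato threshold: it gives $v^{p-1}\in L^{(q+1)/(p-1)}$ with exponent strictly above the critical $N/(2s)$, and the singular weight $|x|^{-(p-1)\gamma_\theta}$ is absorbed thanks to $\gamma_\theta<(N-2s)/2$ and the fractional Hardy inequality \eqref{Hardy}. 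Testing the weighted equation against truncations of $v^\beta$ for $\beta=\beta_k\to\infty$, applying the weighted Sobolev inequality at each step and controlling the nonlinear contribution by the sub-Kato bound drives $v$ into $L^r$ for every $r<\infty$; a final Moser/De~Giorgi step upgrades this to $v\in L^\infty(\Rn)$, and unwinding the substitution yields the pointwise estimate $u(x)\leq C|x|^{-\gamma_\theta}$ with $C=C(p,q,s,N,\gamma_\theta,\|u\|_{L^{q+1}(\Rn)})$, as claimed.

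The principal technical obstacle is not the iteration itself but the rigorous derivation of the weighted equation for $v$: $\phi_\theta\notin\dot H^s(\Rn)$, so it cannot be used directly as a test function in \eqref{1.1bis}. This forces a careful cut-off/approximation procedure both at the origin and at infinity, together with the identification of the bilinear cross term $-I_s(\phi_\theta,v)$ as the ingredient that symmetrizes $\mathcal{L}_{\phi_\theta}$ into the weighted Dirichlet form above. Complementary to this, one must establish the weighted Sobolev and Hardy-type inequalities that make the Moser scheme close, and it is here that the restriction $\gamma_\theta<(N-2s)/2$ plays its role. Once these preparatory tools are in place, the control near the origin, where the Hardy singularity dictates the growth rate $|x|^{-\gamma_\theta}$, and the control at infinity, where the natural decay of solutions to $L_\theta u=f$ is much faster than $|x|^{-\gamma_\theta}$, are unified into the single statement $v\in L^\infty(\Rn)$.
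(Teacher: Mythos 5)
Your proposal follows essentially the same route as the paper: the substitution $v=|x|^{\gamma_\theta}u$ (i.e.\ $u=\phi_\theta v$) reduces \eqref{eq:a3} to the weighted nonlocal problem \eqref{eq:a3'} for the operator $(-\De_\gamma)^s$ with bilinear form $\iint\frac{(v(x)-v(y))(\phi(x)-\phi(y))}{|x-y|^{N+2s}}\frac{dx}{|x|^\gamma}\frac{dy}{|y|^\gamma}$, and then a Moser iteration in the weighted space (Proposition~\ref{l:5}) yields $v\in L^\infty(\Rn)$, whence $u\leq C|x|^{-\gamma_\theta}$. The one place your sketch would need to be tightened is exactly the step you flag as the ``principal technical obstacle'': rather than a formal fractional product rule with $\phi_\theta\notin\dot H^s$, the paper makes the ground-state substitution rigorous by invoking the Frank--Lieb--Seiringer identity (Lemma~\ref{lem:FrLiSe}) together with the density result of Dipierro--Valdinoci, which is precisely the clean way to obtain the weighted Dirichlet form and cancel the Hardy term.
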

This result is achieved by a delicate use of Moser iteration technique for an equivalent problem to~\eqref{eq:a3} (see in particular problem \eqref{eq:a3'}) in a weighted fractional space. However due to the presence of super critical exponent and the unbounded domain, the standard Moser 
iteration technique seems to be insufficient to tackle the problem.  There are many intriguing steps involved in the Moser iteration to prove the 
upper bound of solutions, see Proposition \ref{l:5}. 

\

The rest of the paper is organized as follows. In Section 2 we provide a regularity result (i.e. Proposition \ref{l:8})  and  we briefly discuss the different representation of fractional Laplacian using harmonic extension method. Then  we prove  Theorem \ref{thm-non-exist}. In Section 3 we prove Theorem \ref{thm:rad} and Theorem \ref{t:3}.

\

\noindent {\bf Notation.} Throughout the present paper, we denote by $C, C_1, C_2, C', \tilde C, \hat{C}, \cdots$ positive constants that may
vary from line to line. If necessary, the dependence of these constants will be made precise. By $f\lesssim g$ we mean $f\leq Cg$ and by $f\approx g$  we mean $cg\leq f\leq Cg$, for some positive constants $c, C$.
\section{Existence,  nonexistence and regularity results for solutions to \eqref{eq:a3}}\label{section-2}
We start providing a useful result about the regularity of the solutions to the problem \eqref{eq:a3}. We have the following 
\begin{proposition}\lab{l:8}
Let $u$ be a nonnegative solution of \eqref{eq:a3} with 
$$q+1>(p-1)\frac{N}{2s}.$$ 
Then  $u>0$ in $\Rn$ and $u\in C^\infty(\Rn\setminus\{0\})$.
\end{proposition}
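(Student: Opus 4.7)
The plan is to bootstrap regularity via a Moser-type iteration in $\mathbb R^N\setminus\{0\}$, where the Hardy potential is locally bounded and therefore harmless, and then to deduce strict positivity from an application of the strong maximum principle for $(-\Delta)^s$.

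Fix a compact set $K\subset\mathbb R^N\setminus\{0\}$ and a cutoff $\eta\in C^\infty_c(\mathbb R^N\setminus\{0\})$ with $\eta\equiv 1$ on $K$. Because $|x|^{-2s}$ is bounded on $\mathrm{supp}\,\eta$, the Hardy term only contributes a controlled lower-order piece. Testing the equation (after truncating $u_k:=\min\{u,k\}$) against $u_k^{2\beta-1}\eta^2$ and using the standard nonlocal Leibniz-type inequality leads to
\[
\|u_k^\beta\eta\|_{\dot H^s(\mathbb R^N)}^2 \leq C_\eta\int_{\mathbb R^N}u^{p+2\beta-1}\eta^2\,dx-\int_{\mathbb R^N}u^{q+2\beta-1}\eta^2\,dx+\text{l.o.t.}
\]
The absorbing sign of $-u^q$ together with $u\in L^{q+1}(\mathbb R^N)$ keeps the right-hand side under control. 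For the critical term, write $u^{p+2\beta-1}\eta^2=u^{p-1}(u^\beta\eta)^2$, apply H\"older's inequality with exponents $(N/(2s),N/(N-2s))$, and use the Sobolev embedding $\dot H^s(\mathbb R^N)\hookrightarrow L^{2^*_s}(\mathbb R^N)$ on $u^\beta\eta$. The hypothesis $q+1>(p-1)N/(2s)$ is exactly what is needed: by interpolation between $L^{2^*_s}(\mathbb R^N)$ and $L^{q+1}(\mathbb R^N)$ it gives $u^{p-1}\in L^r_{\mathrm{loc}}(\mathbb R^N\setminus\{0\})$ for some $r>N/(2s)$, and the strict inequality allows the critical contribution to be absorbed into the Dirichlet energy on the left, either by a smallness-of-measure argument after fixing a sublevel set of $u^{p-1}$ or by a direct Young-type splitting. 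An iteration on $\beta$ then yields $u\in L^\infty_{\mathrm{loc}}(\mathbb R^N\setminus\{0\})$.

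Once $u$ is locally bounded away from the origin, the full right-hand side $\theta u/|x|^{2s}+u^p-u^q$ is locally bounded and belongs to every $L^r_{\mathrm{loc}}(\mathbb R^N\setminus\{0\})$. The tail integrability needed for the pointwise definition of $(-\Delta)^s u$ follows from $u\in L^{q+1}(\mathbb R^N)$. Standard Silvestre-type interior estimates for $(-\Delta)^s u=f$ give H\"older regularity on compact subsets of $\mathbb R^N\setminus\{0\}$, and a bootstrap via Schauder estimates for the fractional Laplacian, together with the $C^\infty$-smoothness of $x\mapsto \theta|x|^{-2s}$ on $\mathbb R^N\setminus\{0\}$ and of $t\mapsto t^p-t^q$, upgrades $u$ to $C^\infty(\mathbb R^N\setminus\{0\})$.

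Finally, for strict positivity: since $u$ is continuous and nonnegative away from $0$ and $u\not\equiv 0$, assume for contradiction that $u(x_0)=0$ for some $x_0\neq 0$. Then the principal value is a convergent integral at $x_0$ and
\[
(-\Delta)^s u(x_0)=-a_{N,s}\int_{\mathbb R^N}\frac{u(y)}{|x_0-y|^{N+2s}}\,dy<0,
\]
whereas the equation evaluated at $x_0$ forces $(-\Delta)^s u(x_0)=\theta u(x_0)/|x_0|^{2s}+u(x_0)^p-u(x_0)^q=0$, a contradiction. Hence $u>0$ in $\mathbb R^N\setminus\{0\}$. The main obstacle is the Moser step in the supercritical regime: the critical Sobolev contribution from $u^p$ is not absorbable from the outset, and it is precisely the strict inequality $q+1>(p-1)N/(2s)$ that gives the slack needed, via interpolation, to run the iteration; every subsequent step is a routine bootstrap.
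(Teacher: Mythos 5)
Your proposal follows essentially the same strategy as the paper's proof: Moser iteration to get $u\in L^\infty_{\mathrm{loc}}(\mathbb R^N\setminus\{0\})$ (with the slack from $q+1>(p-1)N/(2s)$ absorbing the supercritical term), a verification of the tail integrability $\int_{\mathbb R^N}|u|/(1+|x|)^{N+2s}\,dx<\infty$ so that $(-\Delta)^s u$ is pointwise well-defined, a Schauder bootstrap to $C^\infty(\mathbb R^N\setminus\{0\})$, and strict positivity by evaluating the pointwise equation at a putative interior zero. A minor remark: your tail-integrability argument (direct H\"older with $u\in L^{q+1}(\mathbb R^N)$, noting $(N+2s)(q+1)/q>N$ always) is a clean shortcut compared with the paper's two-step argument (Hardy inequality near the origin, fractional seminorm at infinity), but leads to the same conclusion.
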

\begin{proof}
Let $x_0\in\Rn\setminus\{0\}$. Since $|x|^{-2s}$ is bounded away from $0$ and $q+1>(p-1)\frac{N}{2s}$, using the Moser iteration method as in Theorem \cite[Theorem 1.3]{BM}, it can be shown that $u\in L^\infty(B_r(x_0))$, for some $r>0$. Since, $x_0$ is arbitrary, it implies that $u\in L^\infty_{loc}(\Rn\setminus\{0\})$. 

{\sc Claim}: \be\lab{int-con}\displaystyle\int_{\Rn}\frac{|u(x)|}{(1+|x|)^{N+2s}} \, dx\, <\infty.\ee

We prove this claim in two steps. 

\

{\bf Step 1}: In this step we show that 
$$\int_{B_1(0)}\frac{|u(x)|}{(1+|x|)^{N+2s}} \, dx <\infty.$$

Indeed, by H\"older and Hardy inequalities we get
\begin{eqnarray*}
&&\int_{B_1(0)}\frac{|u(x)|}{(1+|x|)^{N+2s}} \, dx \, \leq \int_{B_1(0)}\frac{|u(x)|}{|x|^{s}(1+|x|)^{N+2s}} \, dx \\\nonumber &&\leq  C\bigg(\int_{B_1(0)}\frac{|u(x)|^2}{|x|^{2s}} \, dx \, \bigg)^\frac{1}{2}\leq C \, \bigg(\int_{\Rn} \, \frac{|u(x)|^2}{|x|^{2s}} \, dx \, \bigg)^\frac{1}{2} \\\nonumber&&\leq C\|u\|_{\dot{H}^s(\Rn)}<\infty.
\end{eqnarray*}

\

{\bf Step 2}: In this step we show that  
$$\int_{\Rn\setminus B_1(0)}\frac{|u(x)|}{(1+|x|)^{N+2s}} \, dx<\infty.$$

For this first we note that
\bea\lab{4-1-5}
\int_{\Rn\setminus B_1(0)}\frac{|u(x)|}{(1+|x|)^{N+2s}} \, dx \, &\leq& \bigg(\int_{\Rn}\frac{|u(x)|^2}{(1+|x|)^{N+2s}} \, 
dx \, \bigg)^\frac{1}{2}\bigg(\int_{\Rn\setminus B_1(0)}\frac{dx}{(1+|x|)^{N+2s}}\bigg)^\frac{1}{2}\no\\
&\leq& C\bigg(\int_{\Rn}\frac{|u(x)|^2}{(1+|x|)^{N+2s}} \, dx \, \bigg)^\frac{1}{2}.
\eea
We also observe that since $u\in\dot{H}^s(\Rn)$, then 
$$\displaystyle\int_{\Rn}\int_{\Rn}\frac{|u(x)-u(y)|^2}{|x-y|^{N+2s}} \, dx \, dy<\infty$$
and Fubini's theorem implies that
\be\lab{23-1-1}\int_{\Rn}\frac{|u(x)-u(y)|^2}{(1+|x-y|)^{N+2s}} \, dx \, \leq \int_{\Rn}\frac{|u(x)-u(y)|^2}{|x-y|^{N+2s}} \, dx \, < \, \infty, \quad \text{a.e. } y\in\Rn.\ee
Fix some $y \in \Rn$ for which the above expression holds and let $R\in \mathbb R$ be such that $R\geq 10|y|$.
For $|x|\geq R$, we deduce that 
\begin{equation}\label{eq:cartpar}\frac 1C(1+|x|)^{N+2s}\leq(1+|x-y|)^{N+2s}\leq C(1+|x|)^{N+2s},
\end{equation} for some constant $C>1$.  Moreover
\be\lab{23-1-2}\displaystyle\int_{\Rn}\frac{|u(y)|^2}{(1+|x|)^{N+2s}} \, dx \, <\infty.\ee Therefore, using the inequality $$|u(x)|^2\leq 2\big[|u(x)-u(y)|^2+|u(y)|^2\big],$$ the fact that by Sobolev inequality
$$\|u\|_{L^2(B_R(0))}< +\infty,$$ and  equations \eqref{23-1-1}, \eqref{eq:cartpar}, \eqref{23-1-2}, it follows that
\begin{eqnarray}\label{eq:chicchi}
&&\int_{\Rn}\frac{|u(x)|^2}{(1+|x|)^{N+2s}} \, dx \,= \int_{B_R(0)}\frac{|u(x)|^2}{(1+|x|)^{N+2s}} \, dx \, + \int_{\mathbb R^N \setminus B_R(0)}\frac{|u(x)|^2}{(1+|x|)^{N+2s}} \, dx \,\\\nonumber
&&\leq  \int_{B_R(0)}\frac{|u(x)|^2}{(1+|x|)^{N+2s}} \, dx \, + 2\int_{\mathbb R^N \setminus B_R(0)}\frac{|u(x)-u(y)|^2}{(1+|x|)^{N+2s}} \, dx \,+  2\int_{\mathbb R^N \setminus B_R(0)}\frac{|u(y)|^2}{(1+|x|)^{N+2s}} \, dx \,\\\nonumber
&&\leq  \int_{B_R(0)}\frac{|u(x)|^2}{(1+|x|)^{N+2s}} \, dx \,+ C\int_{\mathbb R^N}\frac{|u(x)-u(y)|^2}{(1+|x-y|)^{N+2s}} \, dx \,+C \int_{\mathbb R^N }\frac{|u(y)|^2}{(1+|x|)^{N+2s}} \, dx \,<\infty.
\end{eqnarray}
Substituting \eqref{eq:chicchi} in  \eqref{4-1-5} completes Step 2. 

\

Combining Step 1 and Step 2, we  prove $$\int_{\Rn}\frac{|u(x)|}{(1+|x|)^{N+2s}} \, dx<\infty.$$ Hence the claim follows.

Now using the Schauder estimate in the spirit of \cite[Corollary 2.4 and 2.5]{RS1}, it follows that $u\in C^{\ba+2s}\big(B_{\frac{r}{2}}(x_0)\big)$, for some $\ba>0$. As a consequence, for any $x\in \mathbb R^N\setminus \{0\}$ we can write (see problem \eqref{eq:a3}) in  a pointwise way that  
$$(-\De)^s u(x) -\theta\frac{u(x)}{|x|^{2s}}=u^p(x) - u^q(x)$$ 
and consequently  we deduce that $u>0$ in $\Rn\setminus\{0\}$. Indeed suppose, that is not true. That means there exists $y_0\in\Rn\setminus\{0\}$ such that $u(y_0)=0$, which in turn implies that $u$ attains it's minimum at $y_0$. Since $(-\De)^su$ is defined in pointwise sense in $\Rn\setminus\{0\}$, from the integral representation \eqref{De-u} of $(-\De)^s$,  we see that $(-\De)^su(y_0)<0$. On the other hand $$\theta \, \frac{u(y_0)}{|y_0|^{2s}} \, + \, u^p(y_0)\, -\, u^q(y_0)\, =\, 0$$ and this leads to the contradiction. 

Finally a 
bootstrap procedure using Schauder estimate in the spirit of \cite[Corollary 2.4]{RS1}, yields $u\in C^\infty(\Rn\setminus\{0\})$.
\end{proof}
\begin{remark}\label{rem:giurie}
Proposition \ref{l:8} indeed  contains a strong maximum principle for \eqref{eq:a3}. Actually all non negative solutions to \eqref{eq:a3} are positive providing $q+1>(p-1){N}/{2s}$.
\end{remark}
Moreover we have the following
\begin{lemma}\lab{l:decay}
Let $u$ be a solution of \eqref{eq:a3} with 
$$q+1>(p-1)\frac{N}{2s}.$$ 
Then $u\to 0$ as $|x|\to\infty$.
\end{lemma}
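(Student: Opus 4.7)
The plan is to combine the local regularity provided by Proposition \ref{l:8} with the global integrability $u\in L^{q+1}(\Rn)$ via a contradiction argument at infinity. Since $u$ is continuous on $\Rn\setminus\{0\}$, it suffices to rule out the possibility that $u(x_n)\geq \eps$ for some $\eps>0$ along a sequence with $|x_n|\to\infty$. I will first establish a uniform local $L^\infty$ bound outside a large ball, then upgrade it to uniform Hölder regularity, and finally use a packing argument.

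\textbf{Step 1} (uniform $L^\infty$ at infinity). Fix $R_0>1$ and let $|x_0|\geq R_0$. On $B_1(x_0)$ the Hardy potential $\theta/|x|^{2s}$ is bounded uniformly in $x_0$, so \eqref{eq:a3} reduces to a fractional equation of the form $(-\Delta)^s u = Vu + u^p - u^q$ with a bounded coefficient $V$. Running the Moser iteration of \cite[Theorem 1.3]{BM} in balls centered at $x_0$, whose validity relies precisely on the supercritical assumption $q+1>(p-1)N/(2s)$, and using the global tail control \eqref{int-con} for the nonlocal contributions, one obtains a constant $C$ depending only on $N, s, p, q, \theta, \|u\|_{L^{q+1}(\Rn)}$ and $\|u\|_{\dot{H}^s(\Rn)}$ (but not on $x_0$) such that $\|u\|_{L^\infty(B_{1/2}(x_0))}\leq C$. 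Taking supremum over $|x_0|\geq R_0$ yields $u\in L^\infty(\Rn\setminus B_{R_0}(0))$.

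\textbf{Step 2} (uniform Hölder regularity). With $u$ uniformly bounded outside $B_{R_0}$, the right-hand side $\theta u/|x|^{2s}+u^p-u^q$ is uniformly bounded on $\Rn\setminus B_{R_0+1}$. Applying the Schauder estimates of \cite[Corollaries 2.4, 2.5]{RS1} on each ball $B_{1/2}(x_0)$ with $|x_0|\geq R_0+2$, once again invoking \eqref{int-con} to control the long-range part of $(-\Delta)^s u(x_0)$, I obtain $\alpha\in(0,1)$ and $K>0$ with
$$|u(x)-u(y)|\leq K|x-y|^\alpha\qquad\text{for all } |x|,|y|\geq R_0+2.$$

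\textbf{Step 3} (contradiction). If $u\not\to 0$ at infinity, there exist $\eps>0$ and $|x_n|\to\infty$ with $u(x_n)\geq\eps$. Setting $\rho := (\eps/(2K))^{1/\alpha}$, Step 2 gives $u\geq\eps/2$ on every ball $B_\rho(x_n)$ with $n$ large. Passing to a subsequence so that the $B_\rho(x_n)$ are pairwise disjoint, we find
$$\int_{\Rn}u^{q+1}\,dx\;\geq\;\sum_n\int_{B_\rho(x_n)}u^{q+1}\,dx\;\geq\;\sum_n(\eps/2)^{q+1}|B_\rho|\;=\;+\infty,$$
contradicting $u\in L^{q+1}(\Rn)$. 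The main obstacle is Step 1: making the Moser iteration output uniform in the base point $x_0$ with the $L^\infty$ bound depending only on global norms of $u$. The supercritical condition $q+1>(p-1)N/(2s)$ is exactly what allows one to absorb the highest-order nonlinearity and close the iteration with constants independent of $x_0$.
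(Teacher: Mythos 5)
Your proof is correct and follows the same three-stage strategy the paper uses: a uniform $L^\infty$ bound near infinity via the Moser iteration of \cite[Theorem 1.3]{BM} (this is exactly where the assumption $q+1>(p-1)N/(2s)$ enters), then uniform H\"older regularity from the Schauder estimates of \cite{RS1} combined with the tail estimate \eqref{int-con}, then decay from global integrability. The only cosmetic differences are that you make the final contradiction via a packing argument against $u\in L^{q+1}(\Rn)$, whereas the paper invokes $u\in L^{2^*_s}(\Rn)$ and leaves this last step implicit.
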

\begin{proof}
Thanks to Sobolev inequality, we already have $u\in L^{2^*_s}(\Rn)$. Therefore, in order to prove the lemma, it is enough to show that $u\in C^\sigma(\{|x|>1\})$ uniformly, for some $\si>0$.  To prove this, first we claim the following:

\medskip

{\sc Claim}: There exists a constant $C>0$ and an uniform radius $r>0$ such that $$\sup_{B_r(x)} u\leq C\quad \forall\, x \quad \text{with}\quad |x|>1,$$
where $C$ and $r$ do not depend on $x$.

Indeed the claim follows arguing along the same line as in the proof of \cite[Theorem1.3]{BM}. A careful look on their proof would reveal that 
the constant $C$ ($L^\infty$ bound of the solution) depends only on $p,q,s,N,\|u\|_{L^{q+1}(\Rn)}$ and the radius of the support of the cut-off function that was chosen
 in the proof \cite[Theorem1.3]{BM}, more importantly it is independent of the choice of the point $x.$ Therefore we can choose the radius of the support of the cut-off function there as ${1}/{4},$ which in turn yields $r={1}/{8}$. Hence the claim follows.

In particular, the above claim implies 
\begin{equation}\label{eq:vascovita}
\sup_{|x|>1} u<C,
\end{equation} 
where $C$ is as above. Therefore from \eqref{eq:a3}, we also have $$\sup_{|x|>1} (-\De)^s u<C_1.$$ Combining this with \eqref{int-con}, it follows from the Schauder estimate \cite[Corollary 2.5]{RS1} that 
$$\|u\|_{C^{\sigma}(B_{\frac{r}{2}}(x_0))}\leq C_2,$$ for some $\si\in(0,2s)$,  where $C_2$ does not depend on $x_0$ and $r$ is obtained as in the above claim. Since  $\sup_{|x|>1} u<C,$ then it follows that $u$ is uniformly continuous for $|x|>1$. Hence the lemma follows.
\end{proof}
In the following, we recall now  the other useful representation of fractional laplacian $(-\De)^s$ as harmonic extension \cite{CS}, which we will use to establish the  Pohozaev identity, Proposition~\ref{pro:Pohozaev}. Using the harmonic extension method,  the fractional laplacian $(-\De)^s$ can be seen as a trace class operator (see \cite{CS},  \cite[Appendix A]{FF}).
Let $u \in \dot{H}^s(\Rn)$ be a solution of \eqref{eq:a3} and  define $w:=E_{s}(u)\in \dot H^1(\mathbb R^{N+1}_+; y^{1-2s})$  its $s$-harmonic extension to the upper half space 
$$\R^{N+1}_+:=\big\{z=(x, y)\,\,: \,\,  x\in \mathbb R^N, \, y>0 \big\},$$
that is the unique   solution, see \cite[Proposition 6.2]{FF}, to the following problem
\begin{align}\lab{A-41}
 \begin{cases}
  \mbox{div}(y^{1-2s} \na w)=0  &\quad\mbox{in}\,\, \R^{N+1}_+\\
  w=u &\quad\mbox{on}\,\,\mathbb{R}^N\times \{y=0\},
 \end{cases}
\end{align}
where the space 
$\dot H^1(\mathbb R^{N+1}_+; y^{1-2s})$ is defined as the completion  of $C_0^\infty(\overline{\R_+^{N+1}})$ with respect to the  norm 
$$\norm{\varphi}_{\dot H^1(\mathbb R^{N+1}_+; y^{1-2s})}:=\bigg(\int_{\R_+^{N+1}}y^{1-2s}|\bigtriangledown \varphi|^2 \, dy \, dx\bigg)^\frac{1}{2}.$$
In addition  
\begin{equation}\label{11-12-20-1}-\lim_{y \to 0^+}y^{1-2s}\frac{\pa w}{\pa y}(x,y)=k_{s} (-\De)^{s}u(x)\quad \mbox{in}\,\, \dot H^{-s} (\mathbb R^N),
\end{equation}
where  $\dot H^{-s} (\mathbb R^N)$ denotes the dual of $\dot H^{s}  (\mathbb R^N)$ and 
$$k_{s}=\frac{2^{1-2s}\Ga(1-s)}{\Ga(s)}.$$ 
Moreover, the extension operator $E_s(u):\dot{H}^s(\Rn)\to \dot H^1(\mathbb R^{N+1}_+; y^{1-2s})$ is an isometry, that is, for any $u\in\dot{H}^s(\Rn)$, we have 
\begin{equation}\label{11-12-20-3}
\|E_s(u)\|_{H^1(\mathbb R^{N+1}_+;\, y^{1-2s})}=\sqrt{k_s}\|u\|_{\dot{H}^s(\Rn)},
\end{equation}
see \cite{GS}.
Conversely, for any  $w \in \dot H^1(\mathbb R^{N+1}_+; y^{1-2s})$, we denote its trace on $\Rn\times\{y=0\}$ as $Tr(w):=w(.,0)$. By 
 \cite[Proposition 6.2]{FF} this trace operator is well defined and belongs to $\dot{H}^{s}(\Rn)$, namely there exists a (unique) linear trace operator
 $$T: \dot H^1(\mathbb R^{N+1}_+; y^{1-2s})\rightarrow \dot{H}^{s}(\Rn)$$ 
 such that 
$T(w)(x, y):=w(x,0)$ for any $w\in C^{\infty}_c(\overline{\mathbb R^{N+1}_+})$. Moreover  this trace operator satisfies
\begin{align}\lab{tr-ineq}
k_s \|T(w)\|^2_{\dot{H}^{s}(\Rn)} \leq \|w\|_{\dot H^1(\mathbb R^{N+1}_+; y^{1-2s})}^2,
\end{align}
for all $w\in\dot H^1(\mathbb R^{N+1}_+; y^{1-2s})$.
Consequently,
\be\lab{tr-ineq1}
S_{s,N}\bigg(\int_{\Rn}|T(w)|^{2^*_s}dx\bigg)^{\frac{2}{2^*_s}} \leq \int_{\R^{N+1}_+}y^{1-2s}|\na w|^2 \, dy \, dx,
\ee
for all $w\in\dot H^1(\mathbb R^{N+1}_+; y^{1-2s})$. Inequality \eqref{tr-ineq1} is called the trace inequality.
With the above representation,  $\eqref{A-41}$ can be rewritten as:
\begin{equation}\lab{A-42}
\left\{\begin{aligned}
      \text{div}(y^{1-2s} \na w) &=0  \quad\text{in}\quad \R^{N+1}_+,\\
-\lim_{y \to 0^+}y^{1-2s}\frac{\pa w}{\pa y}(x,y) &= k_s\bigg[\theta\frac{w(.,0)}{|.|^{2s}}+w^p(.,0)-w^{q}(.,0)\bigg] \quad\text{on}\quad\mathbb{R}^N \times \{ y \,= \, 0 \}.
          \end{aligned}
  \right.
\end{equation}

A function $w\in\dot H^1(\mathbb R^{N+1}_+; y^{1-2s})$ with $Tr(w)\in L^{q+1}(\Rn)$, is said to be a weak solution to \eqref{A-42} if for all $\va \in \dot H^1(\mathbb R^{N+1}_+; y^{1-2s})$ with $Tr(\va)\in L^{q+1}(\Rn)$, 
we have
\begin{eqnarray}\lab{A-43}
 \int_{\R^{N+1}_+}y^{1-2s}\na w \na \,  \va\ dx \, dy  &=& \bigg[\theta\int_{\Rn}\frac{w(x,0)}{|x|^{2s}} \,  \va(x,0)\, dx  +\int_{\Rn}w^p(x,0) \, \va (x,0)\, dx \no\\
&&\qquad\qquad- \int_{\Rn}w^{q}(x,0) \, \va(x,0)\, dx\bigg].
\end{eqnarray}
Note that for any weak solution $w \in\dot H^1(\mathbb R^{N+1}_+; y^{1-2s})$ to \eqref{A-42}, the function 
$u:=\mbox{Tr}(w)=w(.,0)\in \dot{H}^{s}(\Rn)$ is a weak solution to \eqref{eq:a3}. 

We prove Theorem \ref{thm-non-exist} using Pohozaev identity and constrained minimisation method. Before proving the theorem we need to  prove a key proposition. 
\begin{proposition}\label{pro:Pohozaev}
Let  $q > 2_s^* - 1$ and $u$ be a solution of 
\begin{equation} \label{prop-nonexist}
\left\{\begin{aligned}
      (-\De)^s u -\theta\frac{u}{|x|^{2s}}&=u^{2_s^* - 1} - u^q \quad\text{in }\quad \Rn, \\
      u &> 0 \quad\text{in }\quad \Rn,\\
            u &\in \dot{H}^s(\Rn)\cap L^{q+1}(\Rn).
 \end{aligned}
  \right.
  \end{equation}
Then there holds the following identity
\begin{equation} \label{P-identity}
\frac{N-2s}{2} \int_{\Rn}\left(u^{2^*_s}-u^{q+1}\right)\, dx= N\int_{\Rn}\left(\frac{1}{2^*_s}u^{2^*_s}-\frac{1}{q+1}u^{q+1}\right)\, dx.
\end{equation}
\end{proposition}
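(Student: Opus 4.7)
The plan is to derive \eqref{P-identity} via a Pohozaev-type argument applied to the $s$-harmonic extension $w=E_s(u)$, exploiting the scale invariance $u\mapsto\lambda^{(N-2s)/2}u(\lambda\,\cdot)$ of $(-\De)^s-\theta|x|^{-2s}$. I would use the combined multiplier
$$\Phi(z):=\tfrac{N-2s}{2}\,w(z)+z\cdot\na w(z),\qquad z=(x,y)\in\R^{N+1}_+,$$
which is the infinitesimal generator of this scaling on the extension side. The advantage of $\Phi$ over the bare $z\cdot\na w$ is that the Hardy contribution as well as the critical power will both cancel algebraically, so no appeal to the weak formulation of \eqref{eq:a3} will be required to absorb the singular integral $\int u^2/|x|^{2s}\,dx$.

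Multiplying $\text{div}(y^{1-2s}\na w)=0$ by $\Phi$ and integrating on the truncated half-annulus $\Om_{\eps,R}:=(B_R\setminus B_\eps)\cap\R^{N+1}_+$, two integrations by parts (together with $\text{div}(y^{1-2s}z)=(N+2-2s)y^{1-2s}$) cause the interior contributions to collapse and leave the purely boundary identity
$$\int_{\pa\Om_{\eps,R}}y^{1-2s}\left[(\na w\cdot\nu)\,\Phi-\tfrac12|\na w|^2(z\cdot\nu)\right]dS=0.$$
On the flat face $\{y=0,\,\eps<|x|<R\}$ one has $z\cdot\nu=0$; using the Neumann-type boundary condition in \eqref{A-42} together with the fact that $y\,\pa_yw\to 0$ as $y\to 0^+$ (which follows from $\pa_yw\sim y^{2s-1}$), this face contributes
$$k_s\int_{\eps<|x|<R}\left[\tfrac{N-2s}{2}u+x\cdot\na u\right]\left[\tfrac{\theta u}{|x|^{2s}}+u^{2_s^*-1}-u^q\right]dx.$$

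Once the two spherical caps $S_\eps^+$ and $S_R^+$ are shown to contribute zero in the limit (see next paragraph), I would integrate by parts in $\R^N$ on each piece of the surviving boundary integral: $(i)$ the Hardy contribution vanishes by $\text{div}(x|x|^{-2s})=(N-2s)|x|^{-2s}$, $(ii)$ the critical contribution vanishes because $N/2_s^*=(N-2s)/2$, and $(iii)$ only the supercritical piece survives, giving
$$\left(\tfrac{N-2s}{2}-\tfrac{N}{q+1}\right)\int_{\Rn}u^{q+1}\,dx=0,$$
which is precisely a rearrangement of \eqref{P-identity}, as is seen by a direct algebraic check using $N/2_s^*=(N-2s)/2$.

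The main obstacle is the rigorous vanishing of the two spherical caps. At infinity one combines Lemma \ref{l:decay} (i.e.\ $u\to 0$) with the finiteness of the weighted Dirichlet energy $\int_{\R^{N+1}_+}y^{1-2s}|\na w|^2\,dz$ to extract a sequence $R_n\to\infty$ along which the integrals on $S_{R_n}^+$ vanish. Near the origin the difficulty is more serious: as Theorem \ref{t:3} suggests, the solution may blow up like $|x|^{-\ga_\theta}$, so the integrand on $S_\eps^+$ is not uniformly bounded; I would choose $\eps_n\to 0$ along a sequence on which the spherical averages of $y^{1-2s}|\na w|^2$ and of $u^2/|x|^{2s}$ are simultaneously small (possible because both corresponding spatial integrals are finite, the latter via \eqref{Hardy}), and then estimate $\Phi$ on $S_{\eps_n}^+$ by splitting it into its $w$-part, controlled via a weighted trace-type Hardy-Sobolev inequality, and its $z\cdot\na w$-part, bounded by Cauchy-Schwarz against the weighted gradient.
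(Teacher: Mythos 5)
Your proposal is correct, and it takes a genuinely different (though closely related) route from the paper's. The paper multiplies $\mathrm{div}(y^{1-2s}\na w)=0$ by the bare multiplier $z\cdot\na w$ on a truncated half-annulus; this leaves the interior term $\tfrac{N-2s}{2}\int y^{1-2s}|\na w|^2$, which is then identified with $\tfrac{N-2s}{2}k_s\int_{\Rn}u\,f(u)\,dx$ by testing the weak formulation of the equation against $u$ itself, and the Hardy contributions cancel only at the very last line. Your combined multiplier $\Phi=\tfrac{N-2s}{2}w+z\cdot\na w$ makes the interior contribution vanish identically (the computation via $\mathrm{div}(y^{1-2s}z)=(N+2-2s)y^{1-2s}$ is right), so no appeal to the weak formulation is needed and the Hardy and critical pieces cancel already on the flat face; this is algebraically cleaner and makes the scaling invariance transparent. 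The price is the extra cap terms $y^{1-2s}(\na w\cdot\nu)\,\tfrac{N-2s}{2}w$ on $S_\eps^+$ and $S_R^+$, which the paper's choice of multiplier avoids entirely: to kill them you need, besides the averaging-in-the-radius trick for $y^{1-2s}|\na w|^2$ that both arguments share, a weighted Hardy inequality $\int_{\R^{N+1}_+}y^{1-2s}|z|^{-2}w^2\,dz\lesssim\int_{\R^{N+1}_+}y^{1-2s}|\na w|^2\,dz$ (valid since $N>2s$) so that Cauchy--Schwarz closes on a common subsequence of radii; your sketch of this is plausible but is where the real work sits. Two further details to make explicit: (a) the passage to $y=0$ should go through an intermediate truncation at $y=\eps$, since the Neumann datum is a priori only an $\dot H^{-s}$ limit and its pointwise identification with $k_sf(u)$ on the annulus uses local H\"older regularity of $f(u)$ away from the origin (the paper invokes Fall--Felli for exactly this); and (b) your final integrations by parts on $\{\eps<|x|<R\}$ produce $\R^N$-boundary terms such as $R\int_{\pa B_R}\left(\theta u^2|x|^{-2s}+u^{2_s^*}+u^{q+1}\right)d\sigma$ and their analogues at $|x|=\eps$, which must also be sent to zero along subsequences using the $L^1$-integrability of these densities. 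Neither point is an obstruction, so the argument goes through.
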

\begin{proof} 
{We prove this proposition using the  harmonic extension method described in this section.  Let us define 
\begin{equation}\label{eq:eva1}
f(u)=f(x,u):=\theta \,  \frac{u}{|x|^{2s}} + u^{2_s^* - 1} - u^q
\end{equation}
and 
\begin{equation}\label{eq:eva2}
F(u)=F(x,u):= \displaystyle\int_{0}^{u}\, f(s) \, ds=\frac{\theta}{2} \,  \frac{u^2}{|x|^{2s}} + \frac{1}{2_s^*}u^{2_s^*} -\frac {1}{q+1}u^{q+1}.
\end{equation}
We point out that by Hardy inequality \eqref{Hardy} and by \eqref{prop-nonexist} it follows  $F(u) \in L^{1}(\Rn).$  Suppose,
  $w$ is the harmonic extension of $u.$ Then $w$ is a solution of

  \begin{equation} \label{upper-half}
\left\{\begin{aligned}
      \mbox{div}(y^{1-2s} \nabla w) = 0     \quad\text{in }\quad \mathbb{R}^{N+1}_{+}, \\
      -\lim_{y \to 0^+}y^{1-2s}\frac{\pa w}{\pa y}(x,y) =k_s f(w(., 0)) \quad\text{on }\quad \Rn \times \{ 0 \},\\
 \end{aligned}
  \right.
  \end{equation}
where $f$ is defined in \eqref{eq:eva1}. 
We denote a point in $\R^{N+1}_+$ as $z = (x, y) \in \Rn \times \mathbb{R}^{+}.$ Let us denote the following sets, for $\varepsilon > 0,\, R> 0 \ \mbox{and}\  \rho > 0$: 
\begin{align*}\label{set1}
& \mathcal{O}_{\varepsilon, R, \rho} := \{ z= (x, y) \in \Rn\times [\varepsilon, + \infty) : \rho^2 < |z|^2 < R^2 \};  \\
& \partial \mathcal{O}^{1}_{\varepsilon, R, \rho} := \{ z= (x, y) \in \Rn \times \{ y = \varepsilon \} : \rho^2 - \varepsilon^2 < |x|^2< R^2 - \varepsilon^2 \};  \\
&  \partial \mathcal{O}^{2}_{\varepsilon, R} := \{ z= (x, y) \in \Rn \times (\varepsilon, \infty)  : |z|^{2} = R^2 \}; \\
&  \partial \mathcal{O}^{3}_{\varepsilon, \rho} := \{ z= (x, y) \in \Rn \times (\varepsilon, \infty)  : |z|^{2} = \rho^2 \}.
\end{align*}
The unit (outward) normal 
 on $\partial \mathcal{O}_{\varepsilon, R}$ is given by 
 \begin{equation}\label{unitnormal}
 \vec{\eta}
 =
\left\{\begin{aligned}
(0, 0, \ldots, 0, -1) &  \quad \mbox{on} \quad  \partial \mathcal{O}^1_{\varepsilon, R, \rho}, \\
 \frac{z}{R}  &  \quad \mbox{on} \quad    \partial \mathcal{O}^{2}_{\varepsilon, R},\\
        -\frac{z}{\rho}  &  \quad \mbox{on} \quad    \partial \mathcal{O}^{3}_{\varepsilon, \rho}.\\
     \end{aligned}
  \right.
  \end{equation}
 On $\mathcal{O}_{\varepsilon, R, \rho},$ there holds 
 $$
 \mbox{div} (y^{1-2s} \nabla w) (z, \nabla w)= \mbox{div} \bigg[y^{1-2s} \nabla w (z, \nabla w) - y^{1-2s} z \frac{|\nabla w|^2}{2}\bigg] + \frac{N-2s}{2} y^{1-2s} |\nabla w|^2.
 $$
 Taking $(z, \nabla w)$ as test function and   integrating by parts on $\mathcal{O}_{\varepsilon, R, \rho}$  we obtain 
 \begin{align}\label{eq:vasc0}
 0=\int_{\mathcal{O}_{\varepsilon, R, \rho}} \mbox{div} (y^{1-2s} \nabla w) (z, \nabla w) \, dx \, dy 
 & =  \int_{\mathcal{O}_{\varepsilon, R, \rho}}  \mbox{div} \left[y^{1-2s} \nabla w (z, \nabla w) - y^{1-2s} z \frac{|\nabla w|^2}{2} \right] \, dx \, dy \\\nonumber
& \quad +    \frac{N-2s}{2} \int_{\mathcal{O}_{\varepsilon, R, \rho}} y^{1-2s} |\nabla w|^2  \, dx \, dy \\\nonumber
&= \underbrace{ \int_{\partial \mathcal{O}^{1}_{\varepsilon, R, \rho}} y^{1-2s} \left[ (x, \nabla_{x} w) (-\partial_{y} w)-y|\partial_y w|^2 + \frac{y}{2} |\nabla w|^2 \right] \, {\rm d}\sigma}_{:= I_{\varepsilon, R, \rho}^{1}} \\\nonumber
&+ \underbrace{\int_{\partial \mathcal{O}^{2}_{\varepsilon, R}} y^{1-2s} \left[ \frac{1}{R} |(z, \nabla w)|^2 - \frac{R}{2} |\nabla w|^2 \right] \, {\rm d}\sigma}_{:= I_{\varepsilon, R}^{2} } \\\nonumber
&+ \underbrace{ \int_{\partial \mathcal{O}^{3}_{\varepsilon, \rho}} y^{1-2s} \left[ -\frac{1}{\rho} |(z, \nabla w)|^2 + \frac{\rho}{2} |\nabla w|^2 \right] \, {\rm d}\sigma}_{:= I_{\varepsilon, \rho}^{3} } \\\nonumber
& +  \frac{N-2s}{2} \int_{\mathcal{O}_{\varepsilon, R, \rho}} y^{1-2s} |\nabla w|^2  \, dx \, dy. 
 \end{align}
 Now consider 
 $$
 I_{\varepsilon, R, \rho}^{1} = \int_{\partial \mathcal{O}^{1}_{\varepsilon, R, \rho}} y^{1- 2s} (- \partial_{y} w) (x, \nabla_{x} w) \, {\rm d}\sigma -\varepsilon^{2- 2s}\int_{\partial \mathcal{O}^{1}_{\varepsilon, R, \rho}}  |\partial_{y} w|^2 \, {\rm d}\sigma + 
  \frac{\varepsilon^{2- 2s}}{2} \int_{\partial \mathcal{O}^{1}_{\varepsilon, R, \rho}} |\nabla w|^2  \, {\rm d} \sigma.
 $$
We use   \cite[Lemma~4.1 and Proposition~3.7]{FF} in our framework, i.e.  we  set $m =0$ and $g(u):= {\theta u}/{|x|^{2s}} + u^{2^* -1} -u^{q} $ in \cite{FF}. Clearly by Proposition~\ref{l:8}, $g\in C^{0,\gamma}(B_{R} \setminus B_{\rho})$ for  $\gamma \in [0, 2-2s).$ Therefore using \eqref{11-12-20-1} 
 we can pass to the limit as $\varepsilon \rightarrow 0,$ in the 1st integral of $I_{\varepsilon, R, {\rho}}^{1}$ and obtain 
 
 \begin{equation}\label{NE1}
\int_{\partial \mathcal{O}^{1}_{\varepsilon, R, \rho}} y^{1- 2s} (- \partial_{y} w) (x, \nabla_{x} w) \, {\rm d}\sigma 
\rightarrow k_s \int_{B^{\prime}_{R, \rho} := \{ \rho^2 < |x|^2 < R^2\}}  f(u) (x, \nabla u) \, dx, \quad \mbox{as} \ \varepsilon \rightarrow 0.
 \end{equation}
 Moreover given any $r>0$  we show that there exists a sequence $\varepsilon_n\rightarrow 0$ such that 
 \begin{equation}\label{NE2}
\lim_{n\rightarrow +\infty}\left (\varepsilon_n^{2- 2s}\int_{|x|^2<r^2}  |\partial_{y} w(x,\varepsilon_n)|^2 \, {\rm d}x + 
  \frac{\varepsilon_n^{2- 2s}}{2} \int_{|x|^2<r^2} |\nabla w(x, \varepsilon_n)|^2  \, {\rm d}x\right )=0.
 \end{equation}
 Indeed by contradiction we would get 
 \begin{equation}\nonumber
\liminf_{\varepsilon\rightarrow 0}\left (\varepsilon^{2- 2s}\int_{|x|^2<r^2}  |\partial_{y} w(x,\varepsilon)|^2 \, {\rm d}x + 
  \frac{\varepsilon^{2- 2s}}{2} \int_{|x|^2<r^2} |\nabla w(x, \varepsilon)|^2  \, {\rm d}x\right )\geq C>0,
 \end{equation}
 namely there exists $\bar \varepsilon>0$ such that 
 \begin{equation}\nonumber
\varepsilon^{1- 2s}\int_{|x|^2<r^2}  |\partial_{y} w(x,\varepsilon)|^2 \, {\rm d}x + 
 \frac{\varepsilon^{1- 2s}}{2} \int_{|x|^2<r^2} |\nabla w(x, \varepsilon)|^2  \, {\rm d}x\geq \frac{C}{2\varepsilon},
 \end{equation}
for any $\varepsilon\in (0,\bar \varepsilon)$.  This is a contradiction because integrating on $(0,\bar\varepsilon)$, $w$ would not belong to $\dot H^1(\mathbb R^{N+1}_+; y^{1-2s})$. Therefore combining \eqref{NE1} and \eqref{NE2}, replacing $\varepsilon $ with $\varepsilon_n$  we conclude 
 \begin{equation}\label{11-12-20-2}
  I_{\varepsilon_n, R, \rho}^{1} \rightarrow k_s \int_{B^{\prime}_{R, \rho} := \{ \rho^2 < |x|^2 < R^2\}}  f(u) (x, \nabla u) \, dx, \quad \mbox{as} \   n\rightarrow +\infty
 \end{equation}
 and 
 \begin{equation}\nonumber
 I_{\varepsilon_n, R}^{2} \rightarrow \int_{S^+_R:=\{\mathbb R^{N+1}_+ \cap |z|^2=R^2\}} y^{1-2s} \left[ R |\partial_{\vec\eta}w |^2 - \frac{R}{2} |\nabla w|^2 \right] \, {\rm d}\sigma, \quad \mbox{as} \   n\rightarrow +\infty
 \end{equation}
 \begin{equation}\nonumber
 I_{\varepsilon_n, \rho}^{3} \rightarrow \int_{S^+_\rho:=\{\mathbb R^{N+1}_+ \cap |z|^2=\rho^2\}} y^{1-2s} \left[ -\rho |\partial_{\vec\eta}w |^2 + \frac{\rho}{2} |\nabla w|^2 \right] \, {\rm d}\sigma, \quad \mbox{as} \   n\rightarrow +\infty.
 \end{equation}
Moreover we have
 \begin{align*}
I_{R}^{2} := \left |\int_{S^+_R} y^{1-2s} \left[ R |\partial_{\vec\eta}w |^2 - \frac{R}{2} |\nabla w|^2 \right] \, {\rm d}\sigma \right |\leq 2 R \int_{S^+_R} y^{1-2s} |\nabla w|^2 \, {\rm d} \sigma.
 \end{align*}
We claim that there exists a sequence  $R_{n'} \rightarrow +\infty,$  such that 
 $$
 \lim_{{n'}\rightarrow +\infty}2R_{n'} \int_{S^+_{R_{n'}}} y^{1- 2s} |\nabla w|^2 \, {\rm d}\sigma =0.
 $$
As we did above, if  the claim is not true, then there exists $\bar{R} > 0$ and $C > 0$ such that 
 $$
 \int_{S^+_{R}} y^{1-2s} |\nabla w|^2 \, {\rm d}\sigma \geq \frac{C}{R},$$
 for any $R \in (\bar{R},+\infty)$.
 This immediately implies that 
 \begin{align*}
 \int_{\mathbb{R}^{N+1}_{+}} y^{1-2s} |\nabla w|^2 \, dx \, dy  & 
 \geq \int_{\bar{R}}^\infty \int_{S^+_{R}} y^{1-2s} |\nabla w|^2 {\rm d}\sigma \, dR \\
 & \geq \int_{\bar{R}}^{\infty} \frac{C}{R} \, dR = + \infty,
 \end{align*}
which is a contradiction to the fact that $w\in \dot H^1(\mathbb R^{N+1}_+; y^{1-2s}) $ and hence the claim holds. Hence
 \begin{equation*}
\lim_{{n'} \rightarrow \infty}   I_{R_{n'}}^{2} = 0
\end{equation*}
and therefore
\begin{equation}\label{11-12-20-4}
\lim_{{n'} \rightarrow \infty} \lim_{n \rightarrow \infty}  I_{\varepsilon_n, R_{n'}}^{2} = 0
\end{equation}
In the same way, we define
 \begin{eqnarray}\nonumber
I_{\rho}^{3}:=\left|\int_{S^+_\rho:=\{\mathbb R^{N+1}_+ \cap |z|^2=\rho^2\}} y^{1-2s} \left[ -\rho |\partial_{\vec\eta}w |^2 + \frac{\rho}{2} |\nabla w|^2 \right] \, {\rm d}\sigma\right|\leq 
2\rho\int_{S^+_\rho} y^{1-2s} |\nabla w|^2 \, {\rm d}\sigma
 \end{eqnarray}
Since  $w \in\dot H^1(\mathbb R^{N+1}_+; y^{1-2s})$,  arguing similarly as in the case of $I^2_{R}$, we conclude there exists a sequence $\rho_{{n''}} \rightarrow 0,$ such that 
\begin{equation}\label{rho-1}
\lim_{{n''}\rightarrow +\infty}2\rho_{n''} \int_{\partial S^{+}_{\rho_{n''}}}  y^{1-2s} |\nabla w|^2 \, {\rm d}\sigma =0
\end{equation}
and then
 \begin{equation}\label{111-12-20-4}
\lim_{{n''} \rightarrow \infty}\lim_{{n} \rightarrow \infty}    I_{\varepsilon_n, \rho_{n''}}^{3} = 0.
\end{equation}
Using integration by parts in \eqref{11-12-20-2} and recall \eqref{eq:eva2} we obtain 
 \begin{align}\label{NE3}
 \int_{B^{\prime}_{R, \rho} := \{ \rho^2 < |x|^2 < R^2\}}  f(u) (x, \nabla u) \, dx & = \int_{B^{\prime}_{R, \rho}} \bigg(\mbox{div} \big(x F(u)\big) +s\theta\frac{u^2}{|x|^{2s}} - N F(u)\bigg) \, dx \notag \\
 & = R \int_{\partial B_{R}^{\prime}} F(u) \, {\rm d} \sigma - \rho \int_{\partial B_{\rho}^{\prime}} F(u) \, {\rm d} \sigma \\\nonumber
 & + s\theta\int_{B^{\prime}_{R, \rho}}\frac{u^2}{|x|^{2s}} \, dx - N \int_{B^{\prime}_{R, \rho}} F(u) \, dx.
 \end{align}
Furthermore, letting $\rho_{n''} \rightarrow 0$ in  \eqref{NE3} along the same sequence as above (or eventually taking a subsequence),  arguing as above  using  Hardy \eqref{Hardy} inequality and the fact that $F(u) \in L^{1}(\Rn)$,  we get
\begin{align}\label{eq:vasc1}
 \lim_{{n''}\rightarrow +\infty} \int_{B^{\prime}_{R,\rho_{n''}} := \{ \rho_{{n''}}^2 < |x|^2 < R^2\}}  f(u) (x, \nabla u) \, dx &=
  R \int_{\partial B_{R}^{\prime}} F(u) \, {\rm d} S + s\theta\int_{B^{\prime}_{R}}\frac{u^2}{|x|^{2s}} \, dx  - N \int_{B^{\prime}_{R}} F(u) \, dx.
\end{align}
Similarly,  along the same sequence (or eventually on a subsequence)  $\{ R_{n'} \}$ (chosen in $I^2_{R}$)   we obtain
 \begin{align}\label{eq:vasc2}
 \lim_{n'\rightarrow +\infty} \lim_{{n''}\rightarrow +\infty}\int_{B^{\prime}_{R_{n'},\rho_{n''}} := \{ \rho_{{n''}}^2 < |x|^2 < R_{{n'}}^2\}}  f(u) (x, \nabla u) \, dx &=
s\theta\int_{\mathbb R^N}\frac{u^2}{|x|^{2s}} \, dx  - N \int_{\mathbb R^N} F(u) \, dx.
\end{align}
 Combining equations \eqref{11-12-20-2},  \eqref{NE3}, \eqref{eq:vasc1} and \eqref{eq:vasc2} we obtain
\begin{equation}\label{11-12-20-5}
\lim_{R_{n'} \rightarrow +\infty}\lim_{{n''}\rightarrow + \infty} \lim_{n\rightarrow + \infty} I_{\varepsilon_n, R_{n'}, \rho_{n''}}^{1} =s k_s\theta\int_{\mathbb R^N}\frac{u^2}{|x|^{2s}} \, dx -N  k_s\int_{\Rn} F(u) \, dx.
\end{equation}
Further, using \eqref{11-12-20-3}, we also have
 \begin{align}\notag\label{NE4}
\lim_{R_{n'} \rightarrow +\infty}\lim_{{n''}\rightarrow + \infty} \lim_{n\rightarrow + \infty}\frac{N-2s}{2} \int_{\mathcal{O}_{\varepsilon_n, R_{n'}, \rho_{n''}}} y^{1-2s} |\nabla w|^2 \, dx \, dy
 & =  \frac{N-2s}{2} \int_{\mathbb{R}^{N+1}_{+}} y^{1-2s} |\nabla w|^2 \, dx \, dy \\ \notag
 & = \frac{N-2s}{2} k_s\|u\|_{\dot{H}^s(\Rn)}^2 \notag \\
 & = \frac{N-2s}{2} k_s \int_{\Rn} u f(u) \, dx.
 \end{align}
 Finally combining \eqref{eq:vasc0}, \eqref{11-12-20-4}, \eqref{111-12-20-4}, \eqref{11-12-20-5}, \eqref{NE4}, we get 
$$
\frac{N-2s}{2} \int_{\Rn} u f(u) \, dx =-s\theta\int_{\mathbb R^N}\frac{u^2}{|x|^{2s}} \, dx+ N \int_{\Rn} F(u) \, dx,
$$
namely 
$$ 
\frac{N-2s}{2} \int_{\Rn}\left(u^{2^*_s}-u^{q+1}\right)\, dx= N\int_{\Rn}\left(\frac{1}{2^*_s}u^{2^*_s}-\frac{1}{q+1}u^{q+1}\right)\, dx.
$$
 }
 \end{proof}
We are ready to give the 
{\begin{proof}[\bf {Proof of Theorem~\ref{thm-non-exist}}]
(i):  Let $u$ be a solution of \eqref{prop-nonexist}, then using \eqref{P-identity} we have 
$$
\frac{N - 2s}{2} \int_{\Rn} \left (u^{2^*_s} - u^{q + 1} \right ) \, dx
= N \int_{\Rn} \left (\frac{1}{2^{*}_{s}} u^{2^*_s} - \frac{1}{q + 1}u^{q + 1} \right ) \, dx.
$$
Rearranging the above terms, we get
$$
\left(\frac{N - 2s}{2} - \frac{N}{q + 1} \right) \int_{\Rn} u^{q + 1} \, dx = 0.
$$
Since $q > 2^*_s - 1$ this immediately implies that $u \equiv 0$. This proves the first part of the theorem. 

\

(ii): We  use a constrained minimization.
 Let us define the manifold $$\mathcal{N}:= \bigg\{ u\in \dot{H}^s(\R^N) \cap L^{q+1}(\R^N):  \int_{\R^N} |u|^{p+1}{dx}=1 \bigg\}.$$ Define the functional $F$ on $\dot{H}^{s}(\Rn)\cap L^{q+1}(\R^N)$ as
 $$
F(u):=  \frac{a_{N,s}}{4}\int_{\R^N}\int_{\Rn}\frac{|u(x)-u(y)|^2}{|x-y|^{N+2s}} \, dx\,  dy- \frac{\theta}{2}\int_{\Rn}\frac{|u|^2}{|x|^{2s}}\, dx +  \frac{1}{q+1}\int_{\R^N}|u|^{q+1}\, dx
$$
and  define
\begin{equation}\label{eq:peach}
\mathcal{K}:=\inf_{\mathcal{N}}F(u).
\end{equation}

Let $u_{n}$ be a minimizing sequence in $\mathcal{N}$ such that
$$F(u_{n}) \to \mathcal{K} \text{ with } \int_{\R^N} |u_{n}|^{p+1}\,dx=1.$$
We assume that $u_{n} \geq 0$ a.e. in $\mathbb R^N$. This is not restrictive, since we can consider $|u_{n}|$ as a minimizing sequence. 
Since $\theta<\Lambda_{N,s}$, by Remark \ref{r:1} we have that the quantity 
$$\mathcal Q(u):=\frac{a_{N,s}}{2}\int_{\R^N}\int_{\Rn}\frac{|u(x)-u(y)|^2}{|x-y|^{N+2s}} \,{dx \, dy}- \theta\int_{\Rn}\frac{|u|^2}{|x|^{2s}}\,dx$$ is an equivalent norm in $\dot{H}^s(\Rn)$. Hence it follows that   $\{u_n\}$ is a bounded sequence in $\dot{H}^s(\Rn)\cap L^{q+1}(\Rn)$. Therefore, there exists $u\in \dot{H}^s(\Rn)\cap L^{q+1}(\Rn)$ such that $u_n \rightharpoonup u$ in $\dot{H}^s(\Rn)$ and  $u_n \rightharpoonup u$ in $L^{q+1}(\Rn)$. Consequently  $u_{n}\to u$  a. e. in $\mathbb R^N$, by the Sobolev embedding.

By using a Polya-Szeg\"o type  inequality, see~\cite{YJP}, we have that
\begin{equation}\nonumber
\frac{a_{N,s}}{2}\int_{\mathbb{R}^N}\int_{\mathbb{R}^N}\frac{|u_n(x)-u_n(y)|^2}{|x-y|^{N+2s}}\,dx\,dy\geq\frac{a_{N,s}}{2}\int_{\mathbb{R}^N}\int_{\mathbb{R}^N}\frac{|u^*_n(x)-u^*_n(y)|^2}{|x-y|^{N+2s}}\,dx \, dy,
\end{equation}
where by $f^*$ we denote the decreasing rearrangement of a measurable function $f$. 
Therefore via a  rearrangement technique and without loss of generality, we can assume that $u_n$ is  radially symmetric and decreasing. By \cite[Lemma 6.1]{BM}, for every $R:=|x|>0$ we deduce
\begin{equation}\label{eq:alib1}
u_n(R)\leq \frac{C}{R^{\frac{N-2s}{2}}},
\end{equation}
where $C$ is a positive constant that does not depend on $n$. Moreover for every bounded  measurable set $S\subset \mathbb R^N$ we have 
\begin{equation}\label{eq:alib2}
\int_{S} |u_n|^{p+1}\, dx\leq |S|^{\frac{q-p}{q+1}}\left(\int_{S}|u_n|^{q+1}\, dx \right)^{\frac{p+1}{q+1}}\leq C  |S|^{\frac{q-p}{q+1}},
\end{equation}
where $C$ is a positive constant that does not depend on $n$, since $\{u_n\}$ is a bounded sequence in $\dot{H}^s(\Rn)\cap L^{q+1}(\Rn)$. 
Using \eqref{eq:alib2} together with Vitali's theorem we see that $u_{n} \to u \, \text{in}\,\, L^{p+1}(B_R(0))$ for any $R>0$ and 
using equations \eqref{eq:alib1} (recall that $p>2^*_s -1$) it follows that
$u_{n} \to u \,  \text{in}\,\, L^{p+1}(\Rn\setminus B_R(0))$. Therefore,
$$u_{n} \to u \quad  \text{in}\,\, L^{p+1}(\R^N)$$ 
and as a consequence
$$\displaystyle\int_{\R^N }|u|^{p+1}\, dx=1,$$
namely $u\in \mathcal{N}$. Further, using Fatou's lemma and that fact that $u_n \rightharpoonup u$ in $\dot{H}^s(\Rn)$, 
it can be shown that
\begin{eqnarray*}&&\mathcal{K}=   \liminf_{n \to \infty } \left(\frac{1}{2}\mathcal Q(u_n)+  \frac{1}{q+1}\int_{\R^N}|u_{n}|^{q+1}\,dx\right) \\
&&=\liminf_{n \to \infty } \Bigg(\frac{1}{2}\mathcal Q(u)+\frac{1}{2}\mathcal Q(u_n-u) \\
&&+
\frac{a_{N,s}}{2}\int_{\Rn}\int_{\Rn}\frac{(u(x)-u(y))((u_n-u)(x)-(u_n-u)(y))}{|x-y|^{N+2s}}\,dx\, dy-\theta\int_{\Rn}\frac{u(u_n-u)}{|x|^{2s}}dx\\
&&
+  \frac{1}{q+1}\int_{\R^N}|u_{n}|^{q+1}\,dx\Bigg) \\
&&\geq  \left(\frac{1}{2}\mathcal Q(u)+  \frac{1}{q+1}\int_{\R^N}|u|^{q+1}\, dx\right)\\
&&=  F(u) .\end{eqnarray*}
This proves $F(u)=\mathcal{K}$.  Applying the Lagrange multiplier rule, we obtain that $u$ satisfies
\be\no (- \De)^s u-\theta\frac{u}{|x|^{2s}}+ u^q=  \lambda u^p, \ee
for some $\lambda>0$. 
Finally,  setting 
\begin{equation}\label{eq:prscus24}
v(x)=\la^{-\f{1}{q-p}}u(\la^{-\f{q-1}{2s(q-p)}}x),
\end{equation} it holds 
\begin{equation}\label{eq:prscus22}(-\De)^s v-\theta\frac{v}{|x|^{2s}}=v^p-v^q \quad\text{in}\,\,\Rn.
\end{equation} 
The solution $v$ obviously is non negative  in $\mathbb R^N$. We show that actually $v>0$  in $\mathbb R^N$. In fact the limit function $u$ is radial and radial decreasing since by construction,  the sequence $\{u_n\}$ is radial and radial decreasing. From \eqref{eq:prscus24} the solution $v$ does inherit the same properties. Therefore if by contradiction there exists some point $x_0\in \mathbb R^N$ such that $v(x_0)=0$, the support of $v$ would be contained in a ball, say of radius $R_0$. Let us take a non negative test function $\varphi_{c}\in C^{\infty}(\mathbb R^N)$ with compact support $\mathcal K$ contained in $\mathbb R^N\setminus B_{2R_0}$. Using equation \eqref{eq:prscus22} we deduce
\begin{equation}\nonumber
\frac{a_{N,s}}{2}\int_{\Rn}\int_{\Rn}\frac{(v(x)-v(y))(\va_c(x)-\va_c(y))}{|x-y|^{N+2s}}\,dx\, dy=0,
\end{equation}
that is 
\begin{equation}\label{eq:prseagle}
\int_{\mathcal K}\int_{\mathcal K}\frac{(v(x)-v(y))(\va_c(x)-\va_c(y))}{|x-y|^{N+2s}}\,dx\, dy + 2\int_{\mathcal K}\int_{\mathbb R^N\setminus\mathcal K}\frac{(v(x)-v(y))(\va_c(x)-\va_c(y))}{|x-y|^{N+2s}}\,dx\, dy=0.
\end{equation}
Because $v\equiv0$ on $\mathcal K$, the first term of \eqref{eq:prseagle}
\begin{equation}\nonumber
\int_{\mathcal K}\int_{\mathcal K}\frac{(v(x)-v(y))(\va_c(x)-\va_c(y))}{|x-y|^{N+2s}}\,dx\, dy=0.
\end{equation}
The second term 
\begin{equation}\nonumber
2\int_{\mathcal K}\int_{\mathbb R^N\setminus\mathcal K}\frac{(v(x)-v(y))(\va_c(x)-\va_c(y))}{|x-y|^{N+2s}}\,dx\, dy\,=2\int_{\mathcal K}\int_{\mathbb R^N\setminus\mathcal K}\frac{(-v(y))(\va_c(x))}{|x-y|^{N+2s}}\,dx\, dy  <0,\end{equation}
since $v,\varphi_c\geq 0$. This is a contradiction with \eqref{eq:prseagle}.
\end{proof}
\section{Qualitative properties  for solutions to \eqref{eq:a3}}
In this section we first prove Theorem \ref{thm:rad}. We show that all the solutions of \eqref{eq:a3} (see also Remark~\ref{rem:giurie}) are radial  and radially decreasing with respect to the origin, as stated in Theorem~\ref{thm:rad}. The proof  will be carried out  exploiting the moving plane method. Without loss of  generality, we start considering the $x_1$-direction. 
We denote a point $x\in\mathbb{R}^N$ as $x=(x_1,x_2,\cdots,x_N)$ and, for  any~$\lambda \in \mathbb{R}$, we set
\begin{equation}\label{sigma l}
\Sigma_{\lambda}:=\Big\{x\in \mathbb{R}^N : x_1< \lambda\Big\}
\end{equation}
and
\begin{equation}\label{T l}
T_{\lambda}:=\Big\{x\in \mathbb{R}^N : x_1=\lambda\Big\}.
\end{equation}
For any~$\lambda \in \mathbb{R}$, we also set
\begin{equation}\label{x lambda}
x_\lambda:=(2\lambda-x_1,x_2,\cdots,x_N)
\end{equation}
and we define
\begin{equation}\label{eq:defulamb}
u_{\lambda}(x):=u(x_\lambda).
\end{equation}
With these definitions, if $u$ is a positive solution to \eqref{eq:a3}, then $u_\lambda$ weakly satisfies 
\begin{equation}
  \tag{$\mathcal P_\lambda$}\label{eq:a3lambda}\left\{\begin{aligned}
 (-\Delta)^s u_\lambda -\theta\frac{u_\lambda}{|x_\lambda|^{2s}} &=u_\lambda^p - u_\lambda^q \quad\text{in }\quad \Rn,
 \\
      u_\lambda &>0 \quad\text{in }\quad \Rn,\\
            u_\lambda &\in \dot{H}^s(\Rn)\cap L^{q+1}(\Rn).
 \end{aligned}
  \right.
 \end{equation}
Indeed for any $\varphi\in\dot{H}^s(\R^N) \cap L^{q+1}(\Rn)$, we have
 \begin{eqnarray*} 
&&\frac{a_{N,s}}{2}\int_{\mathbb{R}^N}\int_{\mathbb{R}^N}\frac{(u_\lambda(x)-u_\lambda(y))(\varphi(x)-\varphi(y))}{|x-y|^{N+2s}}\,dx\,dy\\
&=&\frac{a_{N,s}}{2}\int_{\mathbb{R}^N}\int_{\mathbb{R}^N}\frac{(u(x_\lambda)-u(y_\lambda))(\varphi(x)-\varphi(y))}{|x-y|^{N+2s}}\,dx\,dy\\
&=&\frac{a_{N,s}}{2}\int_{\mathbb{R}^N}\int_{\mathbb{R}^N}\frac{(u(t)-u(z))(\varphi(t_\lambda)-\varphi(z_\lambda))}{|t_\lambda-z_\lambda|^{N+2s}}\,dt\, dz\\
&=&\frac{a_{N,s}}{2}\int_{\mathbb{R}^N}\int_{\mathbb{R}^N}\frac{(u(t)-u(z))(\varphi(t_\lambda)-\varphi(z_\lambda))}{|t-z|^{N+2s}}\,dt\,dz\\
&=&\frac{a_{N,s}}{2}\int_{\mathbb{R}^N}\int_{\mathbb{R}^N}\frac{(u(t)-u(z))(\varphi_\lambda(t)-\varphi_\lambda(z))}{|t-z|^{N+2s}}\,dt\,dz\\
&=& \theta\int_{\mathbb{R}^N}\frac{u(t)}{|t|^{2s}}\,\varphi_\lambda(t)\,dt +\int_{\mathbb{R}^N}(u^{p}(t)-u^{q}(t))\varphi_\lambda(t)\,dt\\
&=&\theta\int_{\mathbb{R}^N}\frac{u_\lambda(x)}{|x_\lambda|^{2s}}\,\varphi(x)\,dx +\int_{\mathbb{R}^N}(u_\lambda^{p}(x)-u_\lambda^{q}(x))\varphi(x)\,dx,
\end{eqnarray*}
where the changes of variables~$t=x_\lambda$ and~$z=y_\lambda$
and~\eqref{1.1bis} were used. Notice that,   if~$\varphi\in\dot{H}^s(\R^N)\cap L^{q+1}(\Rn)$,
then~$\varphi_\lambda\in\dot{H}^s(\R^N)\cap L^{q+1}(\Rn)$ and so~$\varphi_\lambda$ can be used as a test function in~\eqref{1.1bis}. 
\begin{proof}[\bf {Proof of Theorem~\ref{thm:rad}}]
Let $\lambda<0$ and we define the  following function
\begin{equation}\label{defw}
w_\lambda(x):=\left\{\begin{array}{ll}
(u-u_\lambda)^+(x), & \quad {\mbox{ if }} x\in\Sigma_\lambda, \\
-(u-u_\lambda)^-(x), & \quad {\mbox{ if }}x\in\R^N\setminus\Sigma_\lambda,
\end{array} \right.
\end{equation}
where~$(u-u_\lambda)^+:=\max\{u-u_\lambda,0\}$ and~$(u-u_\lambda)^-:=-\min\{u-u_\lambda,0\}$. Note that $w_\lambda$ is anti-symmetric w.r.t. $T_\lambda$. We set
\begin{equation}\label{4.3bis}
\begin{split}
&\mathcal{S}_\lambda\,:=\, \big({\rm supp} \,\,w_\lambda\big)\cap \Sigma_\lambda, \qquad\qquad\qquad\qquad
\mathcal{S}_\lambda^c\,:=\,\Sigma_\lambda\setminus \mathcal{S}_\lambda,\\
&\mathcal{D}_\lambda\,:=\, \big({\rm supp} \,\,w_\lambda\big)\cap \Big(\R^N\setminus\Sigma_\lambda\Big),\qquad\qquad\,
\mathcal{D}_\lambda^c\,:=\,\Big(\R^N\setminus\Sigma_\lambda\Big)\setminus \mathcal{D}_\lambda\,.\\
\end{split}
\end{equation}
Using \eqref{eq:defulamb},  it is not difficult to see
\begin{equation}\label{refle}
{\mbox{$\mathcal D_\lambda$ is the reflection of~$\mathcal S_\lambda$.}}
\end{equation}
{\bf Step 1:} We claim  that
\begin{equation}\label{claim}
{\mbox{$w_\lambda\equiv 0$ for $\lambda<0$ with~$|\lambda|$ sufficiently large.}}
\end{equation}
We start noticing that,  the function~$w_\lambda$ defined
in~\eqref{defw} belongs to~$\dot{H}^s(\mathbb{R}^N)\cap L^{q+1}(\Rn)$. Therefore, we can use it as test
function in the weak  formulations of~\eqref{eq:a3} and~\eqref{eq:a3lambda}.
We have
\begin{eqnarray}\label{eq:M1}\\\nonumber
&&\frac{a_{N,s}}{2}\int_{\mathbb{R}^N}\int_{\mathbb{R}^N}\frac{(u(x)-u(y))(w_\lambda(x)-
w_\lambda(y))}{|x-y|^{N+2s}}\,dx\,dy-\theta\int_{\mathbb{R}^N}\frac{u}{|x|^{2s}}w_\lambda \,dx\\\nonumber
&&=\int_{\mathbb{R}^N}(u^{p}-u^{q})w_\lambda \,dx,
\\\nonumber
&&\frac{a_{N,s}}{2}\int_{\mathbb{R}^N}\int_{\mathbb{R}^N}\frac{(u_\lambda(x)-u_\lambda(y))
(w_\lambda(x)-w_\lambda(y))}{|x-y|^{N+2s}}\,dx\,dy-\theta\int_{\mathbb{R}^N}\frac{u_\lambda}
{|x_\lambda|^{2s}}w_\lambda \,dx\\\nonumber&&=\int_{\mathbb{R}^N}(u_\lambda^{p}-u_\lambda^{q})w_\lambda \,dx.
\end{eqnarray}
Subtracting the two equations in \eqref{eq:M1} we obtain
\begin{eqnarray}\label{eq:M2}\\\nonumber
&&\frac{a_{N,s}}{2} \int_{\mathbb{R}^N}\int_{\mathbb{R}^N}\frac{((u(x)-u_\lambda(x))-(u(y)-u_\lambda(y)))
(w_\lambda(x)-w_\lambda(y))}{|x-y|^{N+2s}}\,dx\,dy\\\nonumber
&=&\theta\int_{\mathbb{R}^N}\left(\frac{u}{|x|^{2s}}-\frac{u_\lambda}{|x_\lambda|^{2s}}\right)w_\lambda\, dx+\int_{\mathbb{R}^N}(u^{p}-u_\lambda^{p})w_\lambda\, dx- \int_{\mathbb{R}^N}(u^{q}-u_\lambda^{q})w_\lambda\, dx\\\nonumber
&\leq&\theta\int_{\mathbb{R}^N}\frac{(u-u_\lambda)}{|x|^{2s}}w_\lambda \, dx+\int_{\mathbb{R}^N}(u^{p}-u_\lambda^{p})w_\lambda\, dx- \int_{\mathbb{R}^N}(u^{q}-u_\lambda^{q})w_\lambda\, dx,
\end{eqnarray}
since $|x|\geq|x_\lambda|$ and~$w_\lambda\ge0$ in~$\Sigma_\lambda$ and~$|x|\leq|x_\lambda|$ and~$w_\lambda\le0$ outside $\Sigma_\lambda$.
On the other hand, we have
\begin{equation}\label{eq:M3}
\begin{split}
&\int_{\mathbb{R}^N}\int_{\mathbb{R}^N}
\frac{((u(x)-u_\lambda(x))-(u(y)-u_\lambda(y)))(w_\lambda(x)-w_\lambda(y))}{|x-y|^{N+2s}}\,dx\,dy\\
&=\int_{\mathbb{R}^N}\int_{\mathbb{R}^N}
\frac{(w_\lambda(x)-w_\lambda(y))^2}{|x-y|^{N+2s}}\,dx\,dy\\
& +\int_{\mathbb{R}^N}\int_{\mathbb{R}^N}
\frac{((u(x)-u_\lambda(x))-(u(y)-u_\lambda(y))-(w_\lambda(x)-w_\lambda(y)))(w_\lambda(x)-w_\lambda(y))}{|x-y|^{N+2s}}\,dx\,dy\\
&= \int_{\mathbb{R}^N}\int_{\mathbb{R}^N}
\frac{(w_\lambda(x)-w_\lambda(y))^2}{|x-y|^{N+2s}}\,dx\,dy+\int_{\mathbb{R}^N}\int_{\mathbb{R}^N}
\frac{\mathcal{G}(x,y)}{|x-y|^{N+2s}}\,dx\,dy,
\end{split}
\end{equation}
where
\begin{equation}\nonumber
\mathcal{G}(x,y)\,:=\,((u(x)-u_\lambda(x))-(u(y)-u_\lambda(y))-
(w_\lambda(x)-w_\lambda(y)))(w_\lambda(x)-w_\lambda(y))\,.
\end{equation}
Arguing as in \cite{BMS, DMPS}, we deduce that 
\begin{equation}\nonumber
\int_{\mathbb{R}^N}\int_{\mathbb{R}^N}
\frac{\mathcal{G}(x,y)}{|x-y|^{N+2s}}\,dx\,dy\geq 0\,
\end{equation}
and therefore from \eqref{eq:M3}, it follows 
\begin{equation}\label{eq:M33}
\begin{split}
&\int_{\mathbb{R}^N}\int_{\mathbb{R}^N}
\frac{((u(x)-u_\lambda(x))-(u(y)-u_\lambda(y)))(w_\lambda(x)-w_\lambda(y))}{|x-y|^{N+2s}}\,dx\,dy
\\&\geq  \int_{\mathbb{R}^N}\int_{\mathbb{R}^N}
\frac{(w_\lambda(x)-w_\lambda(y))^2}{|x-y|^{N+2s}}\,dx\,dy.
\end{split}
\end{equation}
To estimate  the RHS of \eqref{eq:M2}, we first note that
$$(u-u_\lambda)w_\lambda=w_\lambda^2 \quad\text{in}\quad\Rn.$$
For the first term on the RHS of \eqref{eq:M2},  we use Hardy inequality and we get
\begin{equation}\label{eq:M4}
\theta\int_{\mathbb{R}^N}\frac{(u-u_\lambda)}{|x|^{2s}}w_\lambda \, dx=\theta\int_{\mathbb{R}^N}\frac{w_\lambda^2}{|x|^{2s}} \,dx\leq \frac{\theta}{\Lambda_{N,s}}\frac{a_{N,s}}{2}\int_{\mathbb{R}^N}\int_{\mathbb{R}^N}\frac{(w_\lambda(x)-w_\lambda(y))^2}{|x-y|^{N+2s}}\,dx\,dy.
\end{equation}
To estimate the second term on the RHS of \eqref{eq:M2}, we observe   that for any $t>1$
$$0\leq\frac{u^t-u_\lambda^t}{u-u_\la}\leq t\max\{u^{t-1}, u_\la^{t-1}\}.$$ Therefore,  recalling \eqref{4.3bis} and \eqref{refle}, we have 
\begin{eqnarray}\label{eq:1mat}
&&\int_{\mathbb{R}^N}(u^{p}-u_\lambda^{p})w_\lambda\, dx- \int_{\mathbb{R}^N}(u^{q}-u_\lambda^{q})w_\lambda\, dx\\\nonumber
&&=\int_{\mathbb{R}^N}\frac{u^{p}-u_\lambda^{p}}{u-u_\lambda}w^2_\lambda\, dx- \int_{\mathbb{R}^N}\frac{u^{q}-u_\lambda^{q}}{u-u_\lambda}w^2_\lambda\, dx\\\nonumber
&&\leq\int_{\mathcal{S}_\lambda}\frac{u^{p}-u_\lambda^{p}}{u-u_\lambda}w^2_\lambda\, dx+\int_{\mathcal{D}_\lambda}\frac{u^{p}-u_\lambda^{p}}{u-u_\lambda}w^2_\lambda\, dx\\\nonumber
&&\leq p\int_{\mathcal{S}_\lambda}u^{p-1}\cdot w_\lambda^2\,dx
+p\int_{\mathcal{D}_\lambda}u_\lambda^{p-1}\cdot w_\lambda^2\,dx\\\nonumber
&&= 2p \int_{\mathcal{S}_\lambda}u^{p-1}\cdot w_\lambda^2\,dx,
\end{eqnarray}
where in the last line we exploited a change of variable.
By H\"older and Sobolev inequalities we deduce 
 \begin{eqnarray}\nonumber
  &&\int_{\mathcal{S}_\lambda}u^{p-1}\cdot w_\lambda^2\,dx
  \\\nonumber &&\leq \left(\int_{\mathcal{S}_\lambda}u^{(p-1)\frac{2_s^*}{2^*_s-2}}\,dx\right)^{\frac{2^*_s-2}{2_s^*}}
\left(\int_{\mathcal{S}_\lambda}w_\lambda^{2^*_s}\,dx\right)^{\frac{2}{2^*_s}} \leq \left(\int_{\mathcal{S}_\lambda}u^{(p-1)\frac{2_s^*}{2^*_s-2}}\,dx\right)^{\frac{2^*_s-2}{2_s^*}}
\left(\int_{\mathbb{R}^N} w_\lambda^{2^*_s}\,dx\right)^{\frac{2}{2^*_s}}\\\nonumber
&&\leq S \left(\int_{\mathcal{S}_\lambda}u^{(p-1)\frac{2_s^*}{2^*_s-2}}\,dx\right)^{\frac{2^*_s-2}{2_s^*}}
\int_{\mathbb{R}^N}\int_{\mathbb{R}^N} \frac{(w_\lambda(x)-w_\lambda(y))^2}{|x-y|^{N+2s}}\,dx\,dy.
 \end{eqnarray}
 Collecting the last inequality, \eqref{eq:M2}, \eqref{eq:M33}, \eqref{eq:M4} and \eqref{eq:1mat}, we finally obtain
 \begin{eqnarray}\label{eq:M6}\\\nonumber
&&\left(\frac{a_{N,s}}{2}-\frac{\theta}{\Lambda_{N,s}}\frac{a_{N,s}}{2}\right)\int_{\mathbb{R}^N}\int_{\mathbb{R}^N}\frac{(w_\lambda(x)-w_\lambda(y))^2}{|x-y|^{N+2s}}\,dx\,dy
\\\nonumber&\leq& 2pS\left(\int_{\mathcal{S}_\lambda}u^{(p-1)\frac{2_s^*}{2^*_s-2}}\,dx\right)^{\frac{2^*_s-2}{2_s^*}}\int_{\mathbb{R}^N}\int_{\mathbb{R}^N}\frac{(w_\lambda(x)-w_\lambda(y))^2}{|x-y|^{N+2s}}\,dx\,dy.
\end{eqnarray}
Since  $p> 2^*_s-1$ and \eqref{eq:qmaggioredip}, we get  
\begin{equation}\label{eq:marebrutto}\left(\int_{\mathbb R^N}u^{(p-1)\frac{2_s^*}{2^*_s-2}}\,dx\right)^{\frac{2^*_s-2}{2_s^*}}=\left(\int_{\mathbb R^N}u^{(p-1)\frac{N}{2s}}\,dx\right)^{\frac{2^*_s-2}{2_s^*}} < +\infty.
\end{equation}
Therefore there exists~$R>0$  such that
for~$\lambda<-R$ we have
$$ 2pS\left(\int_{\mathcal{S}_\lambda}u^{(p-1)\frac{2_s^*}{2^*_s-2}}\,dx\right)^{\frac{2^*_s-2}{2_s^*}}\leq
2pS\left(\int_{\Sigma_\lambda}u^{(p-1)\frac{2_s^*}{2^*_s-2}}\,dx\right)^{\frac{2^*_s-2}{2_s^*}}\le  \frac12\left(\frac{a_{N,s}}{2}-\frac{\theta}{\Lambda_{N,s}}\frac{a_{N,s}}{2}\right).$$
This and~\eqref{eq:M6} give that
$$\int_{\mathbb{R}^N}\int_{\mathbb{R}^N}\frac{(w_\lambda(x)-w_\lambda(y))^2}{|x-y|^{N+2s}}\,dx\,dy= 0\,.$$
This implies that  $w_\lambda$ is constant and the claim~\eqref{claim}
follows since~$w_\lambda$ is zero on $\{x_1=\lambda\}$.

Now we define the set
$$
\Lambda:=\{\lambda\in \mathbb{R}\,:\, u\leq u_\mu \,\text{in }\, \Sigma_\mu\,\, \forall \mu \leq \lambda \}.$$
Notice that~\eqref{claim} implies that~$\Lambda\neq\emptyset$,
and therefore we can consider
\begin{equation}\label{eq:sup}
\bar \lambda:=\sup\Lambda.
\end{equation}
{\bf Step 2:} We will show that
\begin{equation}\label{lambda zero}
\bar\lambda=0.
\end{equation}
Let us  assume by contradiction that $\bar\lambda <0$.
Now, in this case,  we are going to show that we can move the plane a little further to the right reaching a contradiction with the definition \eqref{eq:sup}.
Indeed, by continuity, we have that $u\leq u_{\bar\lambda}$ in~$\Sigma_{\bar\lambda}$ (say outside the reflected point of the origin $0_{\bar\lambda}$, since $u_{\bar\lambda}$ may be not well defined there). We want to prove that 
\begin{equation}\label{eq:mat22}
u<u_{\bar\lambda}\quad \text{in}\,\,\Sigma_{\bar\lambda}.
\end{equation}
The case $u\equiv u_{\bar\lambda}$ in~$\Sigma_{\bar\lambda}$, is  not possible because of the presence of the Hardy potential. Indeed if this would be the case, $|(-\Delta)^s u(0_{\bar\lambda})|<+\infty$, since in the point $0_{\bar\lambda}$ the solution $u$ is regular and consequently $|(-\Delta)^s u_{\bar\lambda}(0_{\bar\lambda})|<+\infty$ as well. Moreover $(-\Delta)^s u(0_{\bar\lambda})=(-\Delta)^s u_{\bar\lambda}(0_{\bar\lambda})$ and therefore  
\begin{equation}\label{eq:hardy}
\theta\frac{u(0_{\bar\lambda})}{|0_{\bar\lambda}|^{2s}} + u^p(0_{\bar\lambda}) - u^q(0_{\bar\lambda})
=\theta\frac{u_{\bar\lambda}(0_{\bar\lambda})}{|0_{\bar\lambda}|^{2s}} + u_{\bar\lambda}^p(0_{\bar\lambda}) - u_{\bar\lambda}^q(0_{\bar\lambda}).
\end{equation}
The right hand side of \eqref{eq:hardy} is well defined, i.e. 
$$ 
\left|\theta\frac{u_{\bar\lambda}(0_{\bar\lambda})}{|0_{\bar\lambda}|^{2s}} + u_{\bar\lambda}^p(0_{\bar\lambda}) - u_{\bar\lambda}^q(0_{\bar\lambda})\right|<+\infty
$$
if and only if $u_{\bar\lambda}(x)=O(|x|^{2s})$ if $x\rightarrow 0_{\bar\lambda}$. We point out that we are in the case $u\equiv u_{\bar\lambda}$ in~$\Sigma_{\bar\lambda}$ and therefore both $u$ and $u_{\bar\lambda}$ are $C^\infty$ in a neighborhood of $0_{\bar\lambda}$.  But this is a contradiction since the solution $u$ is positive in $\mathbb R^N$. 

Therefore, to prove \eqref{eq:mat22}, we assume by contradiction that there exists a point $\bar x$ in $ \Sigma_{\bar\lambda}\setminus \{0_{\bar\lambda}\}$ where
\begin{equation}\label{eq:contrddd}
u(\bar x)=u_{\bar \lambda}(\bar x).
\end{equation}
We fix now  $r>0$ such that $0_{\bar\lambda} \notin \overline{B_r}(\bar x )$ and $0 \notin \overline{B_r}(\bar x_{\bar\lambda})$. Then using  Proposition \ref{l:8} we have that for some  $0<\beta<2s$ it holds
\begin{eqnarray}\nonumber
\|u\|_{\mathcal{C}^{2s+\beta}\left(\overline{B_{\frac{r}{16}}}(\bar x)\right)}\leq C &\text {and}& \|u_{\bar\lambda}\|_{\mathcal{C}^{2s+\beta}\left(\overline{B_{\frac{r}{16}}}(\bar x)\right)}\leq C \quad \text{if } 0<s<1/2,\\\nonumber
\|u\|_{\mathcal{C}^{1,2s+\beta-1}\left(\overline{B_{\frac{r}{16}}}(\bar x)\right)}\leq C &\text {and}& \|u_{\bar\lambda}\|_{\mathcal{C}^{1,2s+\beta-1}\left(\overline{B_{\frac{r}{16}}}(\bar x)\right)}\leq C \quad \text{if } 1/2\leq s<1,
\end{eqnarray}
for some positive constant $C$. As consequence, using also \eqref{int-con},  we can write the pointwise formulation of the problems \eqref{eq:a3} and  \eqref{eq:a3lambda}  for  $u$ and $u_{\bar\lambda}$ respectively at the point $x=\bar x$. Therefore
\begin{eqnarray}\label{eq:1w_l}
&&(-\Delta)^su_{\bar\lambda}(\bar x)-(-\Delta)^su(\bar x)-\theta\left(\frac{u_{\bar\lambda}(\bar x)}{|\bar x_{\bar\lambda}|^{2s}}-\frac{u(\bar x)}{|\bar x|^{2s}}\right)\\\nonumber
&&= (u_{\bar\lambda}^p(\bar x) - u^p(\bar x)) - (u_{\bar\lambda}^q(\bar x)-u^q(\bar x)).
\end{eqnarray}
It is worth noticing that, by \eqref{eq:contrddd} and  the fact that $|\bar x_\lambda|<|\bar x|$ for $\lambda <0$, from \eqref{eq:1w_l} it follows that
\begin{equation}\label{eq:1w_l'}
(-\Delta)^su_{\bar\lambda}(\bar x)-(-\Delta)^su(\bar x)> 0.
\end{equation}

\noindent On the other hand, by \eqref{De-u}, \eqref{sigma l}-\eqref{eq:defulamb}, \eqref{eq:contrddd} and the fact that the function $u_{\bar\lambda}(y)-u(y)$ is odd with respect to the hyperplane $\partial \Sigma_{\bar\lambda}=T_{\bar\lambda}$, it follows that
\begin{eqnarray}\label{Auguri}
&&(-\Delta)^su_{\bar\lambda}(\bar x)-(-\Delta)^su(\bar x)\\\nonumber
&&=a_{N,s}\,{\rm P.V.}\int_{\mathbb{R}^N}\frac{u_{\bar\lambda}(\bar x)-u_{\bar\lambda}(y)}{|\bar x-y|^{N+2s}}dy-a_{N,s}\,{\rm P.V.}\int_{\mathbb{R}^N}\frac{u(\bar x)-u(y)}{|\bar x-y|^{N+2s}}dy\\\nonumber
&&=-a_{N,s}\,{\rm P.V.}\int_{\mathbb{R}^N}\frac{u_{\bar\lambda}(y)-u(y)}{|\bar x-y|^{N+2s}}dy\\\nonumber&&=-a_{N,s}\,{\rm P.V.}\int_{\Sigma_{\bar\lambda}}\frac{u_{\bar\lambda}(y)-u(y)}{|\bar x-y|^{N+2s}}dy-a_{N,s}\,{\rm P.V.}\int_{\mathbb{R}^N\setminus\Sigma_{\bar\lambda}}\frac{u_{\bar\lambda}(y)-u(y)}{|\bar x-y|^{N+2s}}dy
\\\nonumber &&=-a_{N,s}\,{\rm P.V.}\int_{\Sigma_{\bar\lambda}}\frac{u_{\bar\lambda}(y)-u(y)}{|\bar x-y|^{N+2s}}dy-a_{N,s}\,{\rm P.V.}\int_{\Sigma_{\bar\lambda}}\frac{u_{\bar\lambda}(y_{\bar\lambda})-u(y_{\bar\lambda})}{|\bar x-y_{\bar\lambda}|^{N+2s}}dy\\\nonumber
\\\nonumber &&=-a_{N,s}\,{\rm P.V.}\int_{\Sigma_{\bar\lambda}}\frac{u_{\bar\lambda}(y)-u(y)}{|\bar x-y|^{N+2s}}dy-a_{N,s}\,{\rm P.V.}\int_{\Sigma_{\bar\lambda}}\frac{u(y)-u_{\bar\lambda}(y)}{|\bar x-y_{\bar\lambda}|^{N+2s}}dy\\\nonumber
&&=-a_{N,s}\,{\rm P.V.}\int_{\Sigma_{\bar\lambda}}(u_{\bar\lambda}(y)-u(y))\left(\frac{1}{|\bar x-y|^{N+2s}}-\frac{1}{|\bar x-y_{\bar\lambda}|^{N+2s}}\right)dy.
\end{eqnarray}
Since $|\bar x-y|<|\bar x -y_{\bar\lambda}|$ for $\bar x,y\in \Sigma_{\bar\lambda}$ and $u\not \equiv u_{\bar\lambda}$, $u\leq u_{\bar\lambda}$ in  $\Sigma_{\bar\lambda}$, from \eqref{Auguri}, {by continuity}, we have
\begin{equation}\nonumber
(-\Delta)^su_{\bar\lambda}(\bar x)-(-\Delta)^su(\bar x)\,<\,0,\end{equation}
a contradiction with \eqref{eq:1w_l'}. Since  $\bar x$ is an arbitrary point in~$\Sigma_{\bar\lambda}\setminus\{0_{\bar\lambda}\}$, this implies~\eqref{eq:mat22}.

Now notice that, for $\lambda^*<0$ given,  the inequality in~\eqref{eq:M6} holds for any~$\lambda\leq \lambda^*<0$
for a  constant~$C=2p$ that  is independent of~$\lambda$.
Then, since~$\bar\lambda<0$, there exists~$\varepsilon_1>0$
such that~$\bar\lambda+2\varepsilon<0$ for any~$\varepsilon\in(0,\varepsilon_1)$. Let us set $\lambda^*=\bar\lambda+2\varepsilon_1$. 
Recalling the notation introduced in~\eqref{defw} and~\eqref{4.3bis},
we consider the function~$w_{\bar\lambda+\varepsilon}$.
Using the same notation as above let us consider $w_{\bar\lambda+\varepsilon}$ so that
\begin{equation}\nonumber
\text{supp}\,\,w_{\bar\lambda+\varepsilon}\,\equiv\,\mathcal{S}_{\bar\lambda+\varepsilon}
\cup\mathcal{D}_{\bar\lambda+\varepsilon}\,.
\end{equation}
Exploiting the fact that $u<u_{\bar\lambda}$ in $\Sigma_{\bar\lambda}$ and the fact that the solution $u$ is continuous in $\mathbb{R}^N\setminus\{0\}$ (resp. $u_\lambda$ is continuous in $\mathbb{R}^N\setminus\{0_{\bar\lambda}\}$), we deduce that:
\noindent given any $R>0$ (large) and $\delta,\delta_1>0$ (small) we can fix $\bar\varepsilon=\bar\varepsilon(R,\delta,\delta_1)>0$ and $\bar\varepsilon<\varepsilon_1$, such that (arguing by continuity) 
\begin{equation}
(\mathcal{S}_{\bar\lambda+\varepsilon}\cap\Sigma_{\bar\lambda-\delta})
\subset \big(\mathbb{R}^N\setminus (B_R(0)\setminus B_{\delta_1}(0_{\bar\lambda}) \big)\cup B_{\delta_1}(0_{\bar\lambda})\qquad\text{for any}\quad0\leq \varepsilon\leq\bar\varepsilon\,.
\end{equation}
We repeat now the argument above using  $w_{\bar\lambda+\varepsilon}$ as test function in the same fashion as we did using $w_\lambda$ and get again
\begin{eqnarray}\label{eq:M7}\\\nonumber
&&\left(\frac{a_{N,s}}{2}-\frac{\theta}{\Lambda_{N,s}}\frac{a_{N,s}}{2}\right)
\int_{\mathbb{R}^N}\int_{\mathbb{R}^N}\frac{(w_{\bar\lambda+\varepsilon}(x)-w_{\bar\lambda+\varepsilon}(y))^2}{|x-y|^{N+2s}}\,dx\,dy
\\\nonumber&\leq&2pS
\left(\int_{\mathcal{S}_{\bar\lambda+\varepsilon}}u^{(p-1)\frac{2_s^*}{2^*_s-2}}\,dx\right)^{\frac{2^*_s-2}{2_s^*}}
\int_{\mathbb{R}^N}\int_{\mathbb{R}^N}\frac{(w_{\bar\lambda+\varepsilon}(x)-w_{\bar\lambda+\varepsilon}(y))^2}{|x-y|^{N+2s}}\,dx\,dy.
\end{eqnarray}
Since
\begin{equation}\nonumber
\begin{split}
\int_{\mathcal{S}_{\bar\lambda+\varepsilon}}u^{(p-1)\frac{2_s^*}{2^*_s-2}}\,dx &\leq\int_{\mathcal{S}_{\bar\lambda+\varepsilon}\cap\Sigma_{\bar\lambda-\delta}}u^{(p-1)\frac{2_s^*}{2^*_s-2}}\,dx
+\int_{\Sigma_{\bar\lambda+\varepsilon}\setminus \Sigma_{\bar\lambda-\delta}}u^{(p-1)\frac{2_s^*}{2^*_s-2}}\,dx\\
&\leq\int_{\mathbb{R}^N\setminus B_R(0)}u^{(p-1)\frac{2_s^*}{2^*_s-2}}\,dx
+\int_{B_{\delta_1}(0_{\bar\lambda})}u^{(p-1)\frac{2_s^*}{2^*_s-2}}\,dx
+\int_{\Sigma_{\bar\lambda+\varepsilon}\setminus \Sigma_{\bar\lambda-\delta}}u^{(p-1)\frac{2_s^*}{2^*_s-2}}\,dx\\
\end{split}
\end{equation}
for $R$ large and $\delta,\delta_1$ small (recall \eqref{eq:marebrutto}), choosing $\bar\varepsilon(R,\delta,\delta_1)$ as above and eventually reducing it, we can assume that
$$2pS\left(\int_{\mathcal{S}_{\bar\lambda+\varepsilon}}u^{(p-1)\frac{2_s^*}{2^*_s-2}}\,dx\right)^{\frac{2^*_s-2}{2_s^*}}<
\left(\frac{a_{N,s}}{2}-\frac{\theta}{\Lambda_{N,s}}\frac{a_{N,s}}{2}\right).$$
Then from \eqref{eq:M7} we reach that $w_{\bar\lambda+\varepsilon}=0$ and this is a contradiction to \eqref{eq:sup}. Therefore
$$
\bar\lambda=0\,.
$$
{\bf Step 3:} Finally, the symmetry (and monotonicity) in the $x_1$-direction follows as standard repeating the argument in the $(-x_1)$-direction. The radial symmetry result (and the monotonicity of the solution) follows as well performing the Moving Plane Method in any direction $\nu\in \mathbb{S}^{N-1}$.
\end{proof}
In the second part of this section we  study the asymptotic  behaviour of the solutions to \eqref{eq:a3} as stated in Theorem~\ref{t:3}. To do this,  we are going to use a useful  representation result (Lemma \ref{lem:FrLiSe} here below) and then we   reformulate our equation \eqref{eq:a3} to an equivalent singular equation without linear singular term. Towards this, we shall recall a result by Frank, Lieb and Seringer \cite{FLS-1} (in particular equality $(4.3)$ proved in \cite[pag. 935]{FLS-1}).
\begin{lemma}\label{lem:FrLiSe}
Let $0<\ga<(N-2s)/{2}$. If $u\in C^{\infty}_0(\Rn\setminus \{0\})$ and $v(x)=|x|^{\ga}u$, then 
\Bea &&\int_{\Rn}|\xi|^{2s}|\mathcal{F}(u)|^2  \, d\xi \, - \, \big(\Lambda_{N,s}+\Phi_{N,s}(\ga)\big)\int_{\Rn}|x|^{-2s}|u(x)|^2 \, dx \\
&&\qquad=\frac{a_{N,s}}{2}\int_{\Rn}\int_{\Rn}\frac{|v(x)-v(y)|^2}{|x-y|^{N+2s}} \, \frac{dx}{|x|^{\ga}} \, \frac{dy}{|y|^{\ga}},\Eea
where \be\lab{phi-ns}\Phi_{N,s}(\ga):= 2^{2s}\bigg(\frac{\Gamma(\frac{\ga+2s}{2})\Gamma(\frac{N-\ga}{2})}{\Gamma(\frac{N-\ga-2s}{2})\Gamma(\frac{\ga}{2})}
-\frac{\Gamma^2(\frac{N+2s}{4})}{\Gamma^2(\frac{N-2s}{4})}\bigg).\ee
\end{lemma}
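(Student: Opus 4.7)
The plan is to follow the standard ``ground state substitution'' strategy of Frank--Lieb--Seiringer. Writing $v(x)=|x|^{\ga}u(x)$, I first expand the Gagliardo numerator using the algebraic identity
\begin{equation*}
|ac-bd|^2 \,=\, ab\,|c-d|^2 \,+\, (a-b)\bigl(a\,c^2-b\,d^2\bigr),
\end{equation*}
applied with $a=|x|^{\ga}$, $b=|y|^{\ga}$, $c=u(x)$, $d=u(y)$. Dividing by $|x|^{\ga}|y|^{\ga}$ and symmetrizing in $(x,y)$ under the double integral splits the right-hand side of the claimed identity into the bare Gagliardo seminorm $\frac{a_{N,s}}{2}\iint \frac{|u(x)-u(y)|^2}{|x-y|^{N+2s}}\,dx\,dy$ plus a cross term of the form $a_{N,s}\iint \frac{u(x)^2}{|x-y|^{N+2s}}\bigl(\frac{|x|^{\ga}}{|y|^{\ga}}-1\bigr)\,dx\,dy$.

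The second ingredient is the classical pointwise formula
\begin{equation*}
(-\De)^s\bigl(|x|^{-\ga}\bigr) \,=\, \la_{\ga,s}\,|x|^{-\ga-2s},\qquad \la_{\ga,s}\,:=\,2^{2s}\,\frac{\Ga\!\left(\frac{\ga+2s}{2}\right)\Ga\!\left(\frac{N-\ga}{2}\right)}{\Ga\!\left(\frac{N-\ga-2s}{2}\right)\Ga\!\left(\frac{\ga}{2}\right)},
\end{equation*}
valid whenever $0<\ga<N-2s$ (and in particular in our range $\ga<(N-2s)/2$). Applying the definition \eqref{De-u} of $(-\De)^s$ to the radial profile $|x|^{-\ga}$, then dividing through by $|x|^{-\ga}$ and negating, produces the auxiliary identity
\begin{equation*}
a_{N,s}\int_{\Rn}\frac{1}{|x-y|^{N+2s}}\left(\frac{|x|^{\ga}}{|y|^{\ga}}-1\right)dy \,=\, -\la_{\ga,s}\,|x|^{-2s}.
\end{equation*}
Inserting this back into the cross term and integrating against $u(x)^2\,dx$ collapses that term to $-\la_{\ga,s}\int_{\Rn}|x|^{-2s}u^2\,dx$. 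Combining with the Plancherel equivalence \eqref{norm-equi} for the Gagliardo seminorm and observing, from \eqref{La-ns} and \eqref{phi-ns}, that $\la_{\ga,s}=\La_{N,s}+\Phi_{N,s}(\ga)$, one recovers exactly the stated identity. Because $u\in C^{\infty}_0(\Rn\setminus\{0\})$ is compactly supported away from the origin, every integrand remains bounded, Fubini applies, and the principal-value computations above need no further regularization on the support of $u$.

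The main obstacle is the explicit derivation of the constant $\la_{\ga,s}$ in the pointwise formula for $(-\De)^s(|x|^{-\ga})$. I would carry this out on the Fourier side, combining the Riesz potential identity $\mathcal{F}(|\cdot|^{-\ga})(\xi)=c_{N,\ga}\,|\xi|^{\ga-N}$ with $\mathcal{F}((-\De)^s f)=|\xi|^{2s}\mathcal{F}(f)$ and then simplifying the resulting ratio of Gamma factors via the duplication formula. The sharper condition $\ga<(N-2s)/2$, as opposed to the pointwise requirement $\ga<N-2s$, is precisely what guarantees $\Phi_{N,s}(\ga)>0$: it reflects the fact that $\la_{\ga,s}$ attains its minimum $\La_{N,s}$ exactly at the Hardy ground-state exponent $\ga=(N-2s)/2$.
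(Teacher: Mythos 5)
The paper offers no proof of this lemma: it is quoted directly from Frank--Lieb--Seiringer \cite{FLS-1} (identity (4.3) there), so there is no in-paper argument to compare against. What you propose is essentially a reconstruction of the FLS ground-state-substitution proof, and its main chain is correct: the algebraic identity $|ac-bd|^2=ab|c-d|^2+(a-b)(ac^2-bd^2)$ holds; the resulting cross term collapses, via $(-\De)^s\bigl(|x|^{-\ga}\bigr)=\la_{\ga,s}|x|^{-\ga-2s}$, to $-\la_{\ga,s}\int_{\Rn}|x|^{-2s}u^2\,dx$; and $\la_{\ga,s}=\La_{N,s}+\Phi_{N,s}(\ga)$ is exactly the paper's own identity \eqref{psi-theta}. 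One technical point is misstated, however: it is not true that ``every integrand remains bounded'' so that ``Fubini applies.'' The singularity that matters is on the diagonal, not at the origin. Each of the two halves of the cross term behaves like $|x-y|^{1-N-2s}$ as $y\to x$, which is not locally integrable when $s\ge 1/2$; only the symmetric combination $(a-b)(ac^2-bd^2)/\bigl(ab\,|x-y|^{N+2s}\bigr)=O\bigl(|x-y|^{2-N-2s}\bigr)$ is absolutely integrable. The step ``symmetrize, hence the integral equals twice one half evaluated as a principal value'' is precisely where an $\eps$-excision of the diagonal and a passage to the limit are required; this is the one genuinely delicate point of the argument and should be carried out rather than asserted.

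Your closing interpretive remark is also wrong in sign. On $(0,(N-2s)/2]$ one has $\Phi_{N,s}(\ga)\le 0$, not $\Phi_{N,s}(\ga)>0$: by \eqref{6-12-1} and the monotonicity of $\Psi_{N,s}$ recalled in the paper, $\la_{\ga,s}=\Psi_{N,s}(\ga)$ increases from $0$ at $\ga=0$ to its \emph{maximum} $\La_{N,s}$ at $\ga=(N-2s)/2$. This is forced by the identity itself: its right-hand side is nonnegative, so $\la_{\ga,s}>\La_{N,s}$ for some admissible $\ga$ would contradict the sharpness of the Hardy constant. Relatedly, the restriction $\ga<(N-2s)/2$ is not what makes the identity valid (FLS prove it for all $0<\ga<N-2s$); it is merely the range the paper needs later, where $\Psi_{N,s}$ is a bijection onto $[0,\La_{N,s}]$. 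Neither slip affects the identity you derive, but both statements as written are false.
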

Consider, the function
$\Psi_{N,s}:\bigg[0, \frac{N-2s}{2}\bigg]\to[0,\Lambda_{N,s}]$ defined by
\be\lab{6-12-1}\ga\to \Psi_{N,s}(\ga):=\Lambda_{N,s}+\Phi_{N,s}(\ga).\ee Then
$\Psi_{N,s}$ is strictly increasing and surjective (see \cite[Lemma 3.2]{FLS-1}). Therefore, given $\theta\in(0,\Lambda_{N,s})$, there exists unique $\ga\in\big(0, (N-2s)/2\big)$ such that 
\begin{equation}\label{psi-theta-hairagione}
\Psi_{N,s}(\ga)=\theta,
\end{equation}
Substituting $\Lambda_{N,s}$ from \eqref{La-ns} to \eqref{6-12-1} yields
\be\lab{psi-theta}\theta=2^{2s}\frac{\Gamma(\frac{\ga+2s}{2})\Gamma(\frac{N-\ga}{2})}{\Gamma(\frac{N-\ga-2s}{2})\Gamma(\frac{\ga}{2})}.\ee
Let us  define the space $\dot H^{s,\gamma}(\mathbb{R}^N)$
as the closure of $C^{\infty}_0(\mathbb R^N)$ with respect to the norm
$$\| \phi\|_{\dot H^{s,\gamma}(\mathbb{R}^N)}= \left(\int_{\R^N}\frac{|\phi(x)|^{2^*_s}}{|x|^{\gamma 2^*_s}}
\,dx\right)^{\frac{1}{2^*_s}} +
\left(\int_{\mathbb{R}^N}\int_{\mathbb{R}^N}\frac{|\phi(x)-\phi(y)|^2}{|x-y|^{N+2s}}\frac{dx}
{|x|^\gamma}\frac{dy}{|y|^\gamma}\right)^{\frac12}.$$
We also define
$$ \dot W^{s,\gamma}(\R^N):=\{\phi:\R^N\to\R {\mbox{ measurable }}: \| \phi\|_{\dot H^{s,\gamma}(\mathbb{R}^N)}<+\infty\}.$$
Note that by \cite{DV}, one has that
the space $\dot W^{s,\gamma}(\R^N)$ coincides with $\dot H^{s,\gamma}(\R^N)$.

As a consequence of the \textit{ground state representation} given by Lemma~\ref{lem:FrLiSe}, we will transform our problem~\eqref{eq:a3}
into another nonlocal problem in weighted spaces. Namely, we consider~$u\in \dot{H}^s(\mathbb{R}^N)\cap L^{q+1}(\Rn)$  a solution of the problem~\eqref{eq:a3} and set  $v=v_\ga:=|x|^{\ga}u$, with  $\gamma$ given by \eqref{psi-theta-hairagione}.
Furthermore, we introduce the operator~$(-\Delta_{\gamma})^s$, defined as duality product
\begin{equation}\label{operatoralpha}
\big\langle(-\Delta_{\gamma})^s v,\phi\big\rangle=\int_{\mathbb{R}^N}\int_{\mathbb{R}^N}
\frac{(v(x)-v(y))(\phi(x)-\phi(y))}{|x-y|^{N+2s}}\frac{dx}{|x|^\gamma}\frac{dy}{|y|^\gamma},
\end{equation}
for any~$v,\phi\in \dot H^{s,\gamma}(\R^N)$.
Using this representation, if $u$ solves \eqref{eq:a3}, then 
\begin{equation}\label{eq:ladefdiv}
v=|x|^{\ga}u
\end{equation} is a weak solution to 
\begin{equation}
  \label{eq:a3'}
\left\{\begin{aligned}
      (-\De_{\ga})^s v &=\frac{v^p}{|x|^{(p+1)\ga}} - \frac{v^q}{|x|^{(q+1)\ga}} \quad\text{in}\,\, \Rn\\
      v &> 0 \quad\text{in}\,\,\Rn\\
            v &\in \dot{H}^{s,\ga}(\Rn)\cap L^{q+1}(\Rn, |x|^{-(q+1)\ga}), 
 \end{aligned}
  \right.
\end{equation}
We also have (see \cite{AB, DV}) 
\begin{equation}\label{eq:poverino}
C\bigg(\int_{\Rn}\frac{|v(x)|^{2^*_s}}{|x|^{2^*_s\ga}}dx\bigg)^\frac{2}{2^*_s}\leq \|v\|^2_{\dot{H}^{s,\ga}(\Rn)}.
\end{equation}
In the following proposition we prove the boundedness of the weak solutions to  \eqref{eq:a3'}. This result will be the key in the proof of Theorem \ref{t:3}. We have
\begin{proposition}\lab{l:5}
Let $p>2^{*}_s-1$, 
$$q+1>\frac{N}{2s}(p-1)$$
and let $v$ be a solution of  \eqref{eq:a3'}.
Then $v\in L^{\infty}(\mathbb R^N)$.
\end{proposition}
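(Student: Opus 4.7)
The proof proceeds by a weighted Moser iteration tailored to the operator $(-\De_{\ga})^s$ defined in \eqref{operatoralpha}. For $T>0$ and $\beta\ge 0$, setting $v_T:=\min\{v,T\}$, I would test the weak formulation of \eqref{eq:a3'} against $\phi:=v\,v_T^{2\beta}$; this is an admissible element of $\dot H^{s,\ga}(\Rn)\cap L^{q+1}(\Rn,|x|^{-(q+1)\ga})$ thanks to the truncation. Dropping the nonpositive term $-v^q/|x|^{(q+1)\ga}$ on the right and applying the pointwise algebraic inequality
\begin{equation*}
(a-b)\bigl(a\min\{a,T\}^{2\beta}-b\min\{b,T\}^{2\beta}\bigr)\ge c_\beta\bigl(a\min\{a,T\}^{\beta}-b\min\{b,T\}^{\beta}\bigr)^2,\qquad a,b\ge 0,
\end{equation*}
with $c_\beta\simeq (\beta+1)^{-1}$ (in the spirit of \cite{BM, DMPS}), one obtains the coercive estimate
\begin{equation*}
c_\beta\int_{\Rn}\int_{\Rn}\frac{\bigl(v(x)v_T(x)^{\beta}-v(y)v_T(y)^{\beta}\bigr)^2}{|x-y|^{N+2s}}\frac{dx}{|x|^{\ga}}\frac{dy}{|y|^{\ga}}\le \int_{\Rn}\frac{v^{p+1}v_T^{2\beta}}{|x|^{(p+1)\ga}}\,dx.
\end{equation*}

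The next step is to combine this with the weighted Sobolev inequality \eqref{eq:poverino} applied to $v\,v_T^{\beta}$, and to estimate the right-hand side by H\"older's inequality with exponents $N/(2s)$ and $2_s^*/2$ applied to the factorization $v^{p+1}v_T^{2\beta}=v^{p-1}\cdot(v\,v_T^{\beta})^2$, producing
\begin{equation*}
\int_{\Rn}\frac{v^{p+1}v_T^{2\beta}}{|x|^{(p+1)\ga}}\,dx \le \left(\int_{\Rn}\frac{v^{(p-1)N/(2s)}}{|x|^{(p-1)\ga N/(2s)}}\,dx\right)^{\!2s/N}\!\!\left(\int_{\Rn}\frac{(v\,v_T^{\beta})^{2_s^*}}{|x|^{2_s^*\ga}}\,dx\right)^{\!2/2_s^*}\!.
\end{equation*}
The first factor is finite: the substitution $w:=v/|x|^{\ga}$ turns these weighted integrals into $L^r$ norms of $w$, and since $w\in L^{2_s^*}(\Rn)$ by \eqref{eq:poverino} and $w\in L^{q+1}(\Rn)$ by definition of the space in which solutions of \eqref{eq:a3'} live, interpolation yields $w\in L^{(p-1)N/(2s)}(\Rn)$ as soon as $(p-1)N/(2s)\in[2_s^*,q+1]$, which is exactly guaranteed by the assumptions $p\ge 2_s^*-1$ and $q+1>(p-1)N/(2s)$.

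Feeding this back into the coercive estimate produces an inequality in which $\|v\,v_T^{\beta}\|^2_{L^{2_s^*}(\Rn,|x|^{-2_s^*\ga})}$ appears on both sides. To break this apparent circularity I would split the first H\"older factor into its integrals over $\{v>K\}$ and $\{v\le K\}$: by dominated convergence the piece on $\{v>K\}$ tends to zero as $K\to\infty$ and therefore, for $K$ large, can be absorbed on the left; on $\{v\le K\}$ the bound $v^{p-1}\le K^{p-1}$ reduces the analysis to a standard iterative step strictly improving the integrability. Letting $T\to\infty$ by Fatou's lemma, starting from $\beta=0$ (where \eqref{eq:poverino} already supplies the initial integrability), one runs a Moser bootstrap along the chain $\beta_{n+1}:=\tfrac{2_s^*}{2}(\beta_n+1)-1$; tracking the growth of the constants $c_{\beta_n}^{-1}\sim \beta_n+1$ and taking $(\beta_n+1)$-th roots in the classical way delivers $v\in L^\infty(\Rn)$. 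The main obstacle is precisely the absorption step: the threshold $K$ must be chosen so that the smallness on $\{v>K\}$ defeats the Moser constant \emph{uniformly} along the iteration, and this is where the \emph{strict} inequality $q+1>(p-1)N/(2s)$ becomes essential, since equality would forbid arbitrarily small tails. A secondary technical point is the verification, uniform in $T$, that $\phi=v\,v_T^{2\beta}$ is an admissible test function in $\dot H^{s,\ga}(\Rn)\cap L^{q+1}(\Rn,|x|^{-(q+1)\ga})$, so that the monotone passage $T\to\infty$ is justified at every step.
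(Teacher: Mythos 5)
Your overall strategy (weighted Moser iteration for $(-\De_{\ga})^s$, with the supercritical term tamed via the finiteness of $\int u^{(p-1)N/(2s)}\,dx$ obtained by interpolating between $L^{2^*_s}$ and $L^{q+1}$) is the same as the paper's, and your global H\"older inequality is correct as stated. But there are two genuine gaps in the way you close the iteration.

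First, you discard the negative term $-v^{q}/|x|^{(q+1)\ga}$ at the outset, and this is exactly what makes your absorption fail near the origin. After splitting into $\{v>K\}$ and $\{v\le K\}$, on $\{v\le K\}$ you are left with $K^{p-1}\int_{\{v\le K\}}(v\,v_T^{\beta})^2|x|^{-(p+1)\ga}\,dx$. Since $p+1>2^*_s$ and $\ga>0$, the weight $|x|^{-(p+1)\ga}$ is strictly more singular at the origin than $|x|^{-2^*_s\ga}$, so this quantity is \emph{not} controlled by the iterable integral $\int v^{2\beta+2}|x|^{-2^*_s\ga}\,dx$ (and re-applying H\"older on $\{v\le K\}$ only reproduces the circular estimate with no smallness). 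The paper resolves this precisely by keeping the term $-v^{q-1}/|x|^{(q+1)\ga}$ and using a weighted Young inequality on $B_1$ to absorb the excess singularity of $|x|^{-(p+1)\ga}$ into it, which is possible because $2^*_s+\tfrac{q-1}{p-1}\big((p+1)-2^*_s\big)<q+1$; see \eqref{5-7-4}. Without the negative term there is no mechanism to handle the weight at the origin.

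Second, your uniform absorption along the iteration is not substantiated. To beat the constant $c_{\beta}^{-1}\sim\beta$ you must let $K=K(\beta)\to\infty$, and then the coefficient $K(\beta)^{p-1}$ on the complementary set is uncontrolled: the strict inequality $q+1>(p-1)N/(2s)$ gives $u\in L^{(p-1)N/(2s)}$ but no quantitative rate at which $\int_{\{v>K\}}u^{(p-1)N/(2s)}\,dx$ vanishes, so there is no reason the product of constants $\prod_k (C\beta_k K(\beta_k)^{p-1})^{1/\beta_k}$ converges. The paper avoids this entirely: the $m$-splitting/absorption is used \emph{once}, at the fixed first step $\beta=2^*_s/2$, to launch the scheme; for all subsequent steps the supercritical factor is converted into a critical one with a $\beta$-independent constant by writing, away from the origin, $v^{p-1}|x|^{-(p+1)\ga}=v^{2^*_s-2}u^{p+1-2^*_s}|x|^{-2^*_s\ga}\le C\,v^{2^*_s-2}|x|^{-2^*_s\ga}$, using the uniform bound $u\le C$ on $\{|x|>1\}$ from Lemma \ref{l:decay}. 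You would need to incorporate both of these devices (the negative term near the origin, and the decay of $u$ at infinity) to make your argument close.
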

\begin{proof}
We aim to apply Moser iteration technique to prove this theorem. For $\beta\geq 1$ and $T>0$, we define
 \begin{equation} \label{31-10-1}
  \phi(t) := \phi_{\ba, T}(t)=
\left\{\begin{aligned}
      &t^\ba &&\quad\text{if }\quad 0\leq t\leq T, \\
      &\ba T^{\ba-1}(t-T)+T^{\ba}, &&\quad\text{if }\quad t> T.
 \end{aligned}
  \right.
\end{equation}
Since  $v$ satisfies the problem \eqref{eq:a3'} and $\phi_{\ba, T}$ is a Lipschitz function, it follows that $$\phi_{\ba, T}(v)\in \dot{H}^{s,\ga}(\Rn)\cap L^{q+1}(\Rn, |x|^{-(q+1)\ga}).$$ Moreover, in the weak distribution sense, we have  that
\begin{equation}\label{eq:convex}
(-\De_{\ga})^s \phi(v)\leq \phi'(v)(-\De_{\ga})^s v, \quad v\in  \dot{H}^{s,\ga}(\Rn),
\end{equation}
see \cite{DMPS}. 
By using the weighted Sobolev inequality \cite{AB, DV} we have
\begin{eqnarray}\label{eq:left}
&&\frac{a_{N,s}}{2}
\int_{\mathbb R^N} \int_{\mathbb R^N}\frac{|\phi(v(x))-\phi(v(y))|^2}{|x-y|^{N+2s}}\frac{dx}{|x|^{\gamma}}\frac{dy}{|y|^{\gamma}}\\\nonumber
&\geq&\frac{a_{N,s}}{2}S(N,s,\gamma)\left(\int_{\mathbb{R}^N}|\phi(v)|^{2^*_s}\frac{dx}{|x|^{\gamma\cdot 2^*_s}}\right)^{\frac{2}{2^*_s}}.
\end{eqnarray}
On the other hand, using \eqref{eq:convex},  we  get
\begin{eqnarray}\label{eq: right}
&&\int_{\mathbb{R}^N}\phi(v)(-\Delta_\gamma)^s\phi(v)\leq\int_{\mathbb{R}^N}\phi(v)\phi'(v)(-\Delta_\gamma)^sv\\\nonumber&=&\int_{\mathbb{R}^N}\phi(v)\phi'(v)\left(\frac{v^{p}}{|x|^{(p+1)\ga}} - \frac{v^{q}}{|x|^{(q+1)\ga}}  \right) dx\\\nonumber
&&\leq\beta\int_{\mathbb{R}^N} (\phi (v))^2\left(\frac{v^{p-1}}{|x|^{(p+1)\ga}} - \frac{v^{q-1}}{|x|^{(q+1)\ga}}  \right)dx,
\end{eqnarray}
where we used that $t\phi'(t)\leq \beta \phi (t)$. Using  \eqref{eq:left} and \eqref{eq: right} we obtain
\bea\lab{31-10-2}
\left(\int_{\Rn} |\phi(v)|^{2^*_s}\frac{dx}{|x|^{2^{*}_s \ga}} \right)^\frac{2}{2^*_s} &\leq& C\ba\int_{\Rn}\big(\phi(v)\big)^2\left(\frac{v^{p-1}}{|x|^{(p+1)\ga}} - \frac{v^{q-1}}{|x|^{(q+1)\ga}} \right) \,  dx,\no\\
&=& C\ba (A+B),
\eea
where $C=C(s,N,\gamma)$ and
$$A=\int_{B_1}\big(\phi(v)\big)^2\left(\frac{v^{p-1}}{|x|^{(p+1)\ga}} - \frac{v^{q-1}}{|x|^{(q+1)\ga}}  \right) \,  dx$$
and 
\begin{equation}\label{eq:defdiB}B=\int_{B_1^c}\big(\phi(v)\big)^2\left(\frac{v^{p-1}}{|x|^{(p+1)\ga}} - \frac{v^{q-1}}{|x|^{(q+1)\ga}} \right) \,  dx,
\end{equation}
where $B_1^c:=\mathbb R^N\setminus B^1$.

\noindent {\bf Step 1:} We prove that 
\begin{equation}\label{eq:unmedicoinfam}
\int_{\Rn} \frac{v^{2^*_s\ba}}{|x|^{2^{*}_s\ga }}\, dx \leq  C,\quad  \text{for} \,\, \ba=\frac{2^*_s}{2}.
\end{equation}

To prove this step, first we estimate $B$.
\bea\lab{5-7-3}\\\no
&&B\leq \int_{B_1^c \cap\{v\leq m\}}\big(\phi(v)\big)^2\frac{v^{p-1}}{|x|^{(p+1)\ga}} \, {\rm d}x+ \int_{B_1^c \cap\{v\geq m\}}\big(\phi(v)\big)^2\frac{v^{p-1}}{|x|^{(p+1)\ga}} \, {\rm d}x \\\no
&&\leq m^{p-1}\int_{B_1^c \cap\{v\leq m\}}\frac{\big(\phi(v)\big)^2}{|x|^{2^*_s\ga}} \, {\rm d}x+ \int_{B_1^c \cap\{v\geq m\}}\frac{\big(\phi(v)\big)^2}{|x|^{2\ga}}\frac{v^{p-1}}{|x|^{(p-1)\ga}} \, {\rm d}x \\\no
 &&\leq m^{p-1}\int_{B_1^c \cap\{v\leq m\}}\frac{\big(\phi(v)\big)^2}{|x|^{2^*_s\ga}} \, {\rm d}x 
+ \bigg(\int_{B_1^c \cap\{v\geq m\}}\frac{\big(\phi(v)\big)^{2^*_s}}{|x|^{2^*_s\ga}} \, {\rm d}x\bigg)^\frac{2}{2^*_s}\bigg (\int_{B_1^c \cap\{v\geq m\}}\frac{v^{(p-1)(\frac{2^*_s}{2^*_s-2})}}{|x|^{(p-1)\frac{2^*_s\ga}{2^*_s-2}}} \, {\rm d} x\bigg)^\frac{2^*_s-2}{2^*_s}\\\nonumber
&&\leq m^{p-1}\int_{\mathbb R^N}\frac{\big(\phi(v)\big)^2}{|x|^{2^*_s\ga}} \, {\rm d}x 
+ \bigg(\int_{\mathbb R^N}\frac{\big(\phi(v)\big)^{2^*_s}}{|x|^{2^*_s\ga}} \, {\rm d}x\bigg)^\frac{2}{2^*_s}\bigg (\int_{\mathbb R^N \cap\{v\geq m\}}\frac{v^{(p-1)(\frac{2^*_s}{2^*_s-2})}}{|x|^{(p-1)\frac{2^*_s\ga}{2^*_s-2}}} \, {\rm d} x\bigg)^\frac{2^*_s-2}{2^*_s}.
\eea 
We observe that under our assumptions we have 
$$ 2^*_s< (p-1)\frac{2^*_s}{2^*_s-2}< (q+1).$$ Therefore by interpolation inequality we infer that  
\be\lab{5-7-2}\frac{v^{(p-1)(\frac{2^*_s}{2^*_s-2})}}{|x|^{(p-1)\frac{2^*_s\ga}{2^*_s-2}}}\in L^1(\Rn).\ee
Next, we estimate $A$ as below:
\be\lab{5-7-1}A=A_1-A_2,\ee
where
$$A_1:=\int_{B_1}\big(\phi(v)\big)^2\frac{v^{p-1}}{|x|^{(p+1)\ga}} \, dx\quad\text{and}\quad A_2:=\int_{B_1}\big(\phi(v)\big)^2\frac{v^{q-1}}{|x|^{(q+1)\ga}}\,  dx.$$
We also observe that using weighted Young's inequality we get 
\bea
\frac{v^{p-1}}{|x|^{(p+1)\ga}}&=&\frac{v^{p-1}}{|x|^{\big(2^*_s(\frac{p-1}{q-1})+(p+1)-2^*_s\big)\ga}}\cdot \frac{1}{|x|^{2^*_s\ga(\frac{q-p}{q-1})}}\no\\
&\leq&\frac{\varepsilon v^{q-1}}{|x|^{\big(2^*_s+(\frac{q-1}{p-1})((p+1)-2^*_s)\big)\ga}}+\frac{C(p,q,\eps)}{|x|^{2^*_s\ga}}.\no
\eea
Therefore
\bea
A_1&\leq&\varepsilon\int_{B_1}\frac{(\phi(v))^2v^{q-1}}{|x|^{(2^*_s+(\frac{q-1}{p-1})((p+1)-2^*_s))\ga}} \, dx+C(p,q,\eps)\int_{B_1}\frac{\big(\phi(v)\big)^2}{|x|^{2^*_s\ga}} \, dx.\no
\eea
Note that since $q>p$
$$2^*_s+\bigg(\frac{q-1}{p-1}\bigg)\big((p+1)-2^*_s\big)< q+1$$
and therefore
$$A_1\leq \varepsilon \int_{B_1}\frac{\big(\phi(v)\big)^2v^{q-1}}{|x|^{(q+1)\ga}} \, dx+C(p,q,\eps)\int_{B_1}\frac{\big(\phi(v)\big)^2}{|x|^{2^*_s\ga}} \, dx.$$
Thus, choosing 
$\varepsilon<<1$, from \eqref{5-7-1} we obtain
\be\lab{5-7-4}A\leq C(p,q,\eps)\int_{B_1}\frac{\big(\phi(v)\big)^2}{|x|^{2^*_s\ga}} \, dx\leq C\int_{\Rn}\frac{\big(\phi(v)\big)^2}{|x|^{2^*_s\ga}} \, dx.\ee
Putting together \eqref{5-7-3} and \eqref{5-7-4} into \eqref{31-10-2}, we have
\begin{eqnarray}\label{eq:sntris}\\\nonumber
&&\left(\int_{\Rn} \frac{|\phi(v)|^{2^*_s}}{|x|^{2^{*}_s}\ga } \, dx\right)^\frac{2}{2^*_s} \\\nonumber&&\leq  C\ba\left( (1+m^{p-1})\int_{\Rn}\frac{\big(\phi(v)\big)^2}{|x|^{2^*_s\ga}} \, dx +\left(\int_{\Rn}\frac{\big(\phi(v)\big)^{2^*_s}}{|x|^{2^*_s\ga}} \, dx\right)^\frac{2}{2^*_s}
 \left (\int_{\mathbb R^N \cap\{v\geq m\}}\frac{v^{(p-1)(\frac{2^*_s}{2^*_s-2})}}{|x|^{(p-1)\frac{2^*_s\ga}{2^*_s-2}}} \, dx\right)^\frac{2^*_s-2}{2^*_s} \right),
\end{eqnarray}
with  $C=C(p,q,s,N,\gamma)$ a positive constant. 
By \eqref{5-7-2}, we can fix $m$ such that 
$$
\left (\int_{\mathbb R^N \cap\{v\geq m\}}\frac{v^{(p-1)(\frac{2^*_s}{2^*_s-2})}}{|x|^{(p-1)\frac{2^*_s\ga}{2^*_s-2}}} \, dx\right)^\frac{2^*_s-2}{2^*_s}\leq \frac{1}{2C\beta},$$
where $C$ is given in \eqref{eq:sntris}. Hence
\begin{align*}
\left(\int_{\Rn} \frac{|\phi(v)|^{2^*_s}}{|x|^{2^{*}_s\ga}} \, dx\right)^\frac{2}{2^*_s} &\leq  C\ba \int_{\Rn}\frac{\big(\phi(v)\big)^2}{|x|^{2^*_s\ga}} \, dx \\\nonumber
&\leq C\ba \int_{\Rn}\frac{v^{2\beta}}{|x|^{2^*_s\ga}} \, dx<+\infty,
\end{align*}
up to redefine the constant $C$ and where we used that $\phi(t)\leq t^{\ba}$, $\beta=2^*_s/2$ and
$$\int_{\Rn}\frac{ v^{2^*_s}}{|x|^{2^{*}_s\ga }}\, dx< +\infty,$$
since \eqref{eq:poverino} holds. By Fatou's lemma, taking  $T\to\infty$, we obtain
\be\lab{5-7-6}
\left (\int_{\Rn} \frac{v^{2^*_s\ba}}{|x|^{2^{*}_s\ga }} \, dx\right)^\frac{2}{2^*_s} \leq  C.
\ee
{\bf Step 2}: In this step we establish the iteration formula, i.e. the equation \eqref{5-7-13} below. 

\noindent Towards this, first we observe that from the definition of $B$ \eqref{eq:defdiB}, we can also write
$$B\leq \int_{B_1^c}\big(\phi(v)\big)^2\frac{v^{p-1}}{|x|^{(p+1)\ga}} \, dx.$$ 
Since $p+1>2^*_s$, using \eqref{eq:ladefdiv} and Lemma \ref{l:decay} (see in particular \eqref{eq:vascovita}) we get the following estimate
\begin{align}\lab{5-7-7}
B&\leq \int_{B_1^c}\big(\phi(v)\big)^2\frac{v^{p-1}}{|x|^{(p+1)\ga}} \, dx=\int_{B_1^c}\big(\phi(v)\big)^2\frac{v^{2^*_s-2+p+1-2^*_s}}{|x|^{(p+1)\ga}} \, dx\\\nonumber
&=\int_{B_1^c}\big(\phi(v)\big)^2\frac{v^{2^*_s-2}u^{p+1-2^*_s}}{|x|^{2^*_s\ga}} \, dx\leq C(p,q,s,N,\|u\|_{L^{q+1}(\Rn)})\int_{B_1^c}\big(\phi(v)\big)^2\frac{v^{2^*_s-2}}{|x|^{2^*_s\ga}} \, dx\\\nonumber
&\leq C(p,q,s,N,\|u\|_{L^{q+1}(\Rn)})\int_{\mathbb R^N}\big(\phi(v)\big)^2\frac{v^{2^*_s-2}}{|x|^{2^*_s\ga}} \, dx
\end{align}
On the other hand, from the first inequality of \eqref{5-7-4}, we can estimate $A$ further as follows:
$$A\leq C(p,q)\left(\int_{B_1\cap \{v\geq 1/2\}}\frac{\big(\phi(v)\big)^2}{|x|^{2^*_s\ga}} \, dx \, + \,
\int_{B_1\cap \{v\leq 1/2\}}\frac{\big(\phi(v)\big)^2}{|x|^{2^*_s\ga}} \, dx\right).$$
Therefore \big(recall $\phi(t)\leq t^{\ba}$ and that $0<\ga<(N-2s)/{2}$\big)
\bea\lab{5-7-8}
A&\leq& {C(p,q,s,N)}\int_{B_1\cap \{v\geq 1/2\}}\frac{\big(\phi(v)\big)^2v^{2^*_s-2 }}{|x|^{2^*_s\ga}} \, dx + C(p,q)4^{-\ba}\int_{B_1}\frac{dx}{|x|^{2^*_s\ga}}\no\\
&\leq& C(p,q ,s,N,\gamma)\left(\int_{\Rn} \frac{\big(\phi(v)\big)^2v^{2^*_s-2 }}{|x|^{2^*_s\ga}} \, dx +1\right).
\eea
Combining \eqref{5-7-7} and \eqref{5-7-8} along with \eqref{31-10-2}, we have
\begin{align}\label{eq:frankesp}
\left(\int_{\Rn} \frac{|\phi(v)|^{2^*_s}}{|x|^{2^{*}_s\ga}} \, dx\right)^\frac{2}{2^*_s} &\leq C\ba\Bigg(C(p,q,s,N,\gamma)
\left(\int_{\Rn} \frac{\big(\phi(v)\big)^2v^{2^*_s-2 }}{|x|^{2^*_s\ga}} \, dx +1\right)\\\nonumber
& \quad+C(p,q,s,N,\|u\|_{L^{q+1}(\mathbb R^N)})\int_{\mathbb R^N}\big(\phi(v)\big)^2\frac{v^{2^*_s-2}}{|x|^{2^*_s\ga}} \, dx\Bigg)
\\\nonumber &\leq C\ba\left(\int_{\Rn} \frac{v^{2\ba+2^*_s-2}}{|x|^{2^*_s\ga}} \, dx +1\right),\no
\end{align}
where $C=C(p,q,s,N,\gamma,\|u\|_{L^{q+1}(\mathbb R^N)})$ is a suitable positive constant.
From the definition of $\phi(t)$, we know that $\phi(t)= t^{\ba}$ if $t\leq T$. Using this and Fatou's lemma, we obtain 
\begin{equation*}\left(\int_{\Rn} \frac{v^{2^*_s\ba}}{|x|^{2^{*}_s\ga }} \, dx\right)^\frac{2}{2^*_s} \leq C\ba\left(\int_{\Rn} \frac{v^{2\ba+2^*_s-2}}{|x|^{2^*_s\ga}} \, dx +1\right).
\end{equation*}
Hence
$$\left(\int_{\Rn} \frac{v^{2^*_s\ba}}{|x|^{2^{*}_s\ga }} \, dx+1\right)^\frac{2}{2^*_s}\leq \left(\int_{\Rn} \frac{v^{2^*_s\ba}}{|x|^{2^{*}_s\ga}} \, dx\right)^\frac{2}{2^*_s}+1\leq C\ba\left(\int_{\Rn} \frac{v^{2\ba+2^*_s-2}}{|x|^{2^*_s\ga}} \, dx +1\right),$$
up to redefine the constant $C$ given in \eqref{eq:frankesp}. This implies
\be\lab{5-7-9}\left(\int_{\Rn} \frac{v^{2^*_s\ba}}{|x|^{2^{*}_s\ga }} \, dx+1\right)^\frac{1}{2^*_s(\ba-1)}\leq (C\ba)^\frac{1}{2(\ba-1)}\left(\int_{\Rn} \frac{v^{2\ba+2^*_s-2}}{|x|^{2^*_s\ga}} \, dx +1\right)^\frac{1}{2(\beta-1)}.\ee
For $k\geq 1$, let  us define $\{\beta_{k}\}$ by
\begin{equation}\lab{5-7-10}2 \beta_{k+1}+2^*_s-2=2^*_s \beta_k \quad \text{and} \quad \beta_1=\frac{2^*_s}{2}.
\end{equation} 
Thus taking $\ba=\ba_{k+1}$ in \eqref{5-7-9}, we get
\be\lab{5-7-11}\left(\int_{\Rn} \frac{v^{2^*_s\ba_{k+1}}}{|x|^{2^{*}_s\ga}} \, dx+1\right)^\frac{1}{2^*_s(\ba_{k+1}-1)}\leq (C\ba_{k+1})^\frac{1}{2(\ba_{k+1}-1)}\bigg(\int_{\Rn}\frac{v^{2^*_s\ba_k}}{|x|^{2^*_s\ga}} \, dx +1\bigg)^\frac{1}{2(\ba_{k+1}-1)}.\ee
Moreover,  \eqref{5-7-10}  implies
\be\lab{5-7-12}
2(\ba_{k+1}-1)=2^*_s(\ba_k-1).
\ee
Consequently, we can rewrite \eqref{5-7-11} as
\be\lab{5-7-13}\left( 1+\int_{\Rn} \frac{v^{2^*_s\ba_{k+1}}}{|x|^{2^{*}_s\ga }} \, dx\right)^\frac{1}{2^*_s(\ba_{k+1}-1)}\leq (C\ba_{k+1})^\frac{1}{2(\ba_{k+1}-1)}\left(1+\int_{\Rn}\frac{v^{2^*_s\ba_k}}{|x|^{2^*_s\ga}} \, dx \right)^\frac{1}{2^*_s(\ba_k-1)}.\ee
Iterating the relation \eqref{5-7-12} yields
\be\lab{5-7-14} 
\ba_{k+1}-1=\bigg(\frac{2^*_s}{2}\bigg)^k(\ba_1-1).
\ee
Therefore, $\ba_{k+1}\to\infty$. If we denote
$$A_k:=\left( 1+\int_{\Rn} \frac{v^{2^*_s\ba_{k}}}{|x|^{2^{*}_s\ga }} \, dx\right)^\frac{1}{2^*_s(\ba_{k}-1)}, \quad C_k=(C\ba_{k})^\frac{1}{2(\ba_{k}-1)}$$
we get the recurrence formula $A_{k+1}\leq C_{k+1}A_k$, $k\geq1$. 

\noindent {\bf Step 3}: The conclusion, namely we prove that $\|v\|_{L^{\infty}(\mathbb{R}^N)}\leq C$.

\noindent Arguing by induction  we have
 \begin{eqnarray}\label{eq:induction}
 \log A_{k+1}&\leq &\sum_{j=2}^{k+1}\log C_{j}+\log A_1\\\nonumber&\leq&  \sum_{j=2}^{+\infty}\log C_{j}+\log A_1 < +\infty.
 \end{eqnarray}
 Indeed the series $\displaystyle \sum_{j=2}^{+\infty}\log C_{j}< +\infty$ is convergent: think that $\beta_{k+1}=\beta_1^{k}(\beta_1-1)+1$, see \eqref{5-7-14}   and $A_1\leq C$, see \eqref{eq:unmedicoinfam}.
Using \eqref{eq:induction}, it follows
$$\log\left (\left(\int_{\Rn} \frac{v^{2^*_s\ba_{k+1}}}{|x|^{2^{*}_s\ga }} \, dx\right)^\frac{1}{2^*_s(\ba_{k+1}-1)}\right)\leq\log\left (\left( 1+\int_{\Rn} \frac{v^{2^*_s\ba_{k+1}}}{|x|^{2^{*}_s\ga }} \, dx\right)^\frac{1}{2^*_s(\ba_{k+1}-1)}\right)\leq C$$ and then, for $R>0$ fixed
$$\frac{\gamma}{(\beta_{k+1}-1)}\log\frac 1R +\log\left(
\left( \int_{B_R}v^{\beta_{k+1} \cdot 2^*_s}\,{dx} \right)^{\frac{1}{2^*_s(\beta_{k+1}-1)}}  \right)\leq C.$$
Since (see \eqref{5-7-14}) $\beta_{k}\rightarrow +\infty$ as $k\rightarrow  \infty$, we have
$$\log \left(\left(\int_{B_R}|v|^{\beta_{k+1} \cdot 2^*_s}\,{dx}\right)^{\frac{1}{2^*_s(\beta_{k+1}-1)}}\right)\leq C,$$
with $C$ a positive constant not depending on $R$.
This end the proof since
$$\lim_{k\rightarrow +\infty}\left(\int_{B_R}|v|^{\beta_{k+1} \cdot 2^*_s}\,{dx}\right)^{\frac{1}{2^*_s\beta_{k+1}}}=\|v\|_{L^{\infty}(B_R)}$$
and thus
$$\|v\|_{L^{\infty}(\mathbb{R}^N)}\leq C,$$
where $C=C(p,q,s,N,\gamma,\|u\|_{L^{q+1}(\mathbb R^N)})$. This  end the proof.
\end{proof}
\begin{proof}[\bf {Proof of Theorem~\ref{t:3}}] Let $\gamma_{\theta}$  the unique solution of \eqref{psi-theta-hairagione}.  Setting  $v_{\gamma_{\theta}}:=|x|^{\gamma_{\theta}}u$, by    Proposition~\eqref{l:5} we have that 
$$v_{\gamma_{\theta}}(x)\leq C,\quad x\in \mathbb R^N,$$
for some positive constant $C=C(p,q,s,N,\gamma_{\theta},\|u\|_{L^{q+1}(\mathbb R^N)})$.
Then
$$0<u(x)\leq C |x|^{-\gamma_{\theta}}, \quad |x|>0.$$
\end{proof}

\vspace{3mm}

\noindent {\bf Acknowledgement}:  The research of M.~Bhakta is partially supported by the {\em SERB MATRICS grant (MTR/2017/000168) and WEA grant (WEA/2020/000005)}. D.~Ganguly is partially supported by INSPIRE faculty fellowship (IFA17-MA98). L. Montoro is partially supported by PRIN project  2017JPCAPN (Italy): {\em Qualitative and quantitative aspects of nonlinear PDEs} and Project  PDI2019-110712GB-100, 2020-2023, MICINN  (Spain): {\em Ecuaciones con perturbaciones de potencias del Laplaciano}.

\end{document}